\theoremstyle{definition}
\newtheorem{theorem}{Theorem}[section]
\newtheorem{lemma}[theorem]{Lemma}
\newtheorem{observation}[theorem]{Observation} 
\theoremstyle{definition}
\newtheorem{definition}[theorem]{Definition}
\newtheorem{pclaim}[theorem]{Claim}
\newtheorem*{ac}{Acknowledgments} 
\theoremstyle{remark}
\newenvironment{rmenum}{
\begin{enumerate}

}
{\end{enumerate}}
\title[Bipartite Graft I]{Bipartite Graft I: Dulmage-Mendelsohn Decomposition for Combs}
\author{Nanao Kita}
\address{Tokyo University of Science 
2641 Yamazaki, Noda, Chiba, Japan 278-0022}
\email{kita@rs.tus.ac.jp}
\newcommand{\dmo}{\leq^\circ} 
\newcommand{\dm}{\leq}
\newcommand{\distgt}[4]{\lambda(#3, #4; #1, #2)}
\newcommand{\distgtf}[5]{\lambda(#4, #5; #3; #1, #2)}
\newcommand{\parcut}[2]{\delta_{#1}(#2)}
\newcommand{\tdmo}{\preceq^\circ}
\newcommand{\tdm}{\preceq} 
\newcommand{\tcomp}[2]{\mathcal{G}(#1, #2)}
\newcommand{\tpart}[2]{\mathcal{P}(#1, #2)}
\newcommand{\nontriconn}[1]{\mathcal{C}^{*}(#1)}
\newcommand{\pargtpart}[3]{\mathcal{P}(#3; #1, #2)}
\newcommand{\bpargtpart}[4]{\mathcal{P}(#3; #1, #2)|_{#4}}
\newcommand{\parNei}[2]{N_{#1}(#2)}
\newcommand{\comp}[1]{\mathcal{G}(#1)}
\newcommand{\up}[1]{\mathcal{U}(#1)}
\begin{document}
\begin{abstract} 
We provide an analogue of the Dulmage-Mendelsohn decomposition for a class of grafts known as comb-bipartite grafts. 
The Dulmage-Mendelsohn decomposition in matching theory is a classical canonical structure theorem for bipartite graphs.  
The substantial part of this classical theorem resides in bipartite graphs that are factorizable, 
that is, those with a perfect matching. 
Minimum joins in grafts, also known as minimum $T$-joins in graphs, 
 is a generalization of perfect matchings in factorizable graphs. 
Seb\"o revealed in his paper that comb-bipartite grafts form one of the two fundamental classes of grafts that serve as skeletons or building blocks of any grafts.  
Particularly, any bipartite grafts, that is, bipartite counterpart of grafts, can be considered as a recursive combination of comb-bipartite grafts. 
In this paper, we generalize the Dulmage-Mendelsohn decomposition for comb-bipartite grafts. 
We also show for this decomposition a property that is characteristics to grafts using the general Kotzig-Lov\'asz decomposition for grafts, 
which is a known graft analogue of another canonical structure theorem from matching theory. 
This paper is the first from a series of studies regarding bipartite grafts. 
\end{abstract} 
\maketitle

\section{Definitions on Sets and Graphs} 
For standard notation for sets and graphs, we follow Schrijver~\cite{schrijver2003}. 
In this section, we explain exceptions or  nonstandard definitions that we use. 
We consider multigraphs. That is, graphs can possess loops and parallel edges. 
For a graph $G$, we denote its vertex and edge sets by $V(G)$ and $E(G)$, respectively. 
For two vertices $u$ and $v$ in a graph, $uv$ denotes an edge whose ends are $u$ and $v$. 
As usual, we often denote a singleton $\{x\}$ by $x$. 
We often treat a graph as its vertex sets. 
We denote the symmetric difference of two sets $A$ and $B$ by $A\Delta B$. 
That is, $A\Delta B = (A\setminus B) \cup (B\setminus A)$. 

We treat paths and circuits as graphs. 
That is, a path is a connected graph in which every vertex is of degree two or less. 
A circuit is a connected graph in which every vertex is of degree two. 
For a path $P$ and vertices $x, y\in V(P)$, we denote by $xPy$ the subpath of $P$ whose ends are $x$ and $y$. 

Let $G$ be a graph in the following. 
For $X\subseteq V(G)$, an {\em ear} relative to $X$ is  
either a path whose vertices except ends are disjoint from $X$ 
or a circuit whose vertices except for one are disjoint from $X$.  
For simplicity, we often treat ears as a path even in the case it is a circuit. 
For example, the {\em ends} of an ear relative to $X$ are its vertices in $X$, which are possibly identical.

For $X, Y \subseteq V(G)$,  we denote by $E_G[X, Y]$ the set of edges whose ends are individually in $X$ and $Y$. 
The set $E_G[X, V(G)\setminus X]$ is denoted by $\parcut{G}{X}$. 
A {\em neighbor} of $X$ is a vertex from $V(G)\setminus X$ that is adjacent to a vertex in $X$. 
The neighbor set of $X$ is denoted by $\parNei{G}{X}$.

Let $H$ and $I$ be subgraphs of $G$. 
The addition of $H$ and $I$ are denoted by $H+I$. 
Let $F \subseteq E(G)$. 
The graph obtained by adding $F$ to $H$ is denoted by $H + F$. 
We denote by $H. F$ the subgraph of $H$ determined by $F$. 
The subgraph of $H$ induced by $X\subseteq V(G)$ is denoted by $H[X]$. 
The subgraph $H[V(H)\setminus X]$ is denoted by $H - X$.

\section{Classical Dulmage-Mendelsohn Decomposition}

For a graph $G$, a set $M \subseteq E(G)$ of edges is a {\em perfect matching} or {\em $1$-factor}  
if $|\parcut{G}{v}\cap M | = 1$ holds for every $v\in V(G)$. 
A graph is {\em factorizable} if it has $1$-factors. 
An edge $e$ from a factorizable graph is {\em allowed} if there is a $1$-factor that contains $e$.  

Let $G$ be a factorizable graph. 
Vertices $u, v\in V(G)$ are {\em factor-connected} if $G$ has a path between $x$ and $y$ whose edges are allowed.  
We say that $G$ is {\em factor-connected} if every two vertices are factor-connected. 
A {\em factor-connected component} or {\em factor-component} of $G$ is a maximal factor-connected subgraph. 
The set of factor-components of $G$ is denoted by $\comp{G}$. 

A factorizable graph consists of its factor-connected components, which are mutually disjoint, 
and edges joining distinct factor-components, which are non-allowed. 
A set of edges is a $1$-factor of $G$ if and only if it is a union of $1$-factors taken from every factor-component. 
Hence, factor-components can be considered as the fundamental building blocks of a factorizable graph regarding $1$-factors. 

The Dulmage-Mendelsohn decomposition~\cite{dm1958, dm1959, dm1963, lp1986} is a classical structure theorem in matching theory.  
This decomposition characterizes the composition of a factorizable bipartite graph from its factor-component in terms of partial order 
and can be stated as follows. 
\begin{theorem}[Dulmage and Mendlesohn~\cite{dm1958, dm1959, dm1963}; see also Lov\'asz and Plummer~\cite{lp1986}] \label{thm:dm} 
Let $G$ be a factorizable bipartite graph with color classes $A$ and $B$. 
For $C_1, C_2 \in \comp{G}$, let $C_1 \dmo_A C_2$ if $C_1 = C_2$ or $E_G[A\cap V(C_2), B\cap V(C_1)] \neq \emptyset$. 
For $C_1, C_2 \in \comp{G}$, let $C_1 \dm_A C_2$ if there exist $D_1, \ldots, D_k \in \comp{G}$, where $k\ge 1$, 
such that $C_1 = D_1$, $C_2 = D_k$, and $D_1 \dmo_A \cdots \dmo_A D_k$. 
Then, $\dm_A$ is a partial order over $\comp{G}$. 
\end{theorem}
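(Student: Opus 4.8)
The plan is to show that $\dm_A$ is reflexive, antisymmetric, and transitive on $\comp{G}$. Reflexivity and transitivity are essentially immediate from the definition: reflexivity holds because $C \dmo_A C$ by the clause $C_1 = C_2$, and transitivity follows because $\dm_A$ is defined as the reflexive-transitive closure of $\dmo_A$ (concatenating two $\dmo_A$-chains yields a single chain). So the entire substance of the statement is antisymmetry, and that is where I expect the main obstacle to lie.

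To prove antisymmetry, suppose $C_1 \dm_A C_2$ and $C_2 \dm_A C_1$ with $C_1 \neq C_2$; I aim to derive a contradiction. Chaining the two relations produces a cyclic sequence $D_1 \dmo_A D_2 \dmo_A \cdots \dmo_A D_m \dmo_A D_1$ of distinct-where-needed factor-components in which $C_1$ and $C_2$ both appear, so that at least two consecutive terms are genuinely distinct. The strategy is to extract a potential (a height function) that strictly decreases along nontrivial $\dmo_A$ steps, which is incompatible with returning to the start of a cycle. Concretely, for each factor-component $C$ choose a $1$-factor $M_C$, and assemble a global $1$-factor $\M = \bigcup_{C} M_C$ together with the non-allowed inter-component edges. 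The relation $C_1 \dmo_A C_2$ with $C_1 \neq C_2$ means there is an edge $ab$ with $a \in A \cap V(C_2)$ and $b \in B \cap V(C_1)$; since such inter-component edges are non-allowed, they are unmatched by $\M$, and this orientation (from the $B$-side of $C_1$ to the $A$-side of $C_2$) is what the partial order is meant to track.

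The key technical step is to verify that this orientation cannot close up into a cycle. The cleanest route is to fix any $1$-factor $\M$ and, for each factor-component $C$, define its \emph{level} as the number of $\M$-unmatched (inter-component) edges on a shortest alternating walk from some fixed base component, or equivalently to exploit the bipartite structure directly: an edge realizing $C_1 \dmo_A C_2$ always runs from $B \cap V(C_1)$ to $A \cap V(C_2)$, so traversing a $\dmo_A$-chain and alternating with the matching edges inside each component produces an $\M$-alternating path that never repeats the $A/B$ pattern in a way permitting a return. The hard part will be ruling out a $\dmo_A$-cycle rigorously: I would argue that a cyclic sequence of nontrivial $\dmo_A$ steps, together with a perfect matching inside each component, yields an $\M$-alternating closed walk using at least one non-allowed edge, and then show by a symmetric-difference argument that this non-allowed edge would in fact be allowed — contradicting the definition of factor-components. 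Establishing that this closed walk forces an allowed status on a supposedly non-allowed edge is the crux, and it is exactly where the factorizability and bipartiteness hypotheses must both be invoked.
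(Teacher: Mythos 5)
The paper does not actually prove Theorem~\ref{thm:dm}; it is quoted as a classical result with references, and the only argument of this kind in the paper is the proof of its graft analogue, Theorem~\ref{thm:tdm}. Your strategy coincides with that proof in outline: reflexivity and transitivity are dismissed as definitional, and antisymmetry is attacked by extracting a cycle of nontrivial $\dmo_A$ steps and showing that it forces a supposedly non-allowed inter-component edge to be allowed via a symmetric-difference argument (the paper's Lemma~\ref{lem:circuit} plays exactly the role of ``symmetric difference with an alternating circuit yields another perfect matching'').

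However, you stop precisely at the crux and flag it as ``the hard part,'' so the argument as written is not complete. Three concrete points need to be supplied. First, a closed \emph{walk} is not enough: $M\Delta E(W)$ for a closed walk $W$ that revisits vertices need not be a perfect matching, so you must produce an honest circuit; this is why you should first pass to a minimal cyclic subsequence $D_p \dmo_A \cdots \dmo_A D_q$ in which the components are pairwise distinct (the paper's proof of Theorem~\ref{thm:tdm} does exactly this with the indices $p$ and $q$), which guarantees the per-component path segments are vertex-disjoint. Second, inside each factor-component $D_{i}$ you need an $M$-alternating path from the entry vertex in $A\cap V(D_i)$ to the exit vertex in $B\cap V(D_i)$ that begins and ends with edges of $M$; this is where factor-connectivity of the component is genuinely used (it is the counterpart of the paper's Lemma~\ref{lem:order2path}, which is proved by induction along the defining sequence), and it is not automatic from the mere existence of a perfect matching. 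Third, once these segments are concatenated with the non-allowed edges $b_i a_{i+1}$, one checks the result is an $M$-alternating circuit $Z$, so $M\Delta E(Z)$ is again a perfect matching containing the $b_i a_{i+1}$, contradicting their non-allowed status. None of these steps is false or unfixable, so your approach is sound, but as submitted the proof defers its only nontrivial content.
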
 

The poset $(\comp{G}, \dm_A)$ that is proved by Theorem~\ref{thm:dm} is called the {\em Dulmage-Mendelsohn poset} 
or the {\em Dulmage-Mendelsohn decomposition} for the factorizable bipartite graph $G$ regarding the color class $A$.

\section{Grafts and Joins} 
\subsection{Basic Definitions} 
Let $G$ be a graph, and let $T\subseteq V(G)$ be a set of vertices. 
For the pair $(G, T)$, a {\em join} of $(G, T)$ is a set of edges $F \subseteq E(G)$ 
such that $|F \cap \parcut{G}{v} |$ is odd if and only if $v\in T$ holds. 
We call the pair $(G, T)$ a {\em graft} if $|T\cap V(C)|$ is even for every connected component $C$ of $G$. 
It can easily be observed from a parity argument 
that the pair $(G, T)$ has a join, which can be an emptyset, if and only if $(G, T)$ is a graft. 
For a graft, {\em minimum} joins, that is, joins with the minimum number of edges, are typically of interest. 
We denote the number of edges in a minimum join of a graft $(G, T)$ by $\nu(G, T)$. 
For a graft $(G, T)$, we often treat items or properties of $G$ as they are from $(G, T)$. 
For example, we say that $e\in E(G)$ is an edge of $(G, T)$. 
We say that a graft $(G, T)$ is {\em bipartite} if $G$ is bipartite. 
For a subgraph $H$ of $G$ such that $(H, V(H)\cap T)$ is a graft, 
we say that $(H, V(H)\cap T)$ is a {\em subgraft} of $(G, T)$.

Minimum joins in graphs are in fact a generalization of $1$-factors in factorizable graphs.  
\begin{observation} 
Let $G$ be a factorizable graph, and let $M \subseteq E(G)$. 
Then, $M$ is a $1$-factor of $G$ if and only if $M$ is a minimum join of the graft $(G, V(G))$. 
\end{observation}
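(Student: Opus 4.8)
The plan is to prove both implications through a single double-counting identity, exploiting that here the terminal set is all of $V(G)$. Since $T = V(G)$, a set $F \subseteq E(G)$ is a join of $(G, V(G))$ precisely when $|F \cap \parcut{G}{v}|$ is odd for every $v \in V(G)$. The quantity I would track is the cut-degree sum $\sum_{v \in V(G)} |F \cap \parcut{G}{v}|$. A non-loop edge of $F$ lies in $\parcut{G}{v}$ for exactly its two distinct endpoints, whereas a loop lies in no such cut; hence this sum equals $2|F'|$, where $F'$ is the set of non-loop edges of $F$, and in particular it is at most $2|F|$ with equality exactly when $F$ is loopless. A matching contains no loop, so for a $1$-factor the sum is $2|M|$.

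For the forward direction I would argue as follows. If $M$ is a $1$-factor then $|M \cap \parcut{G}{v}| = 1$ for every $v$, which is odd, so $M$ is a join of $(G, V(G))$, and summing yields $2|M| = |V(G)|$, i.e.\ $|M| = |V(G)|/2$. Conversely, for an arbitrary join $F$ each term $|F \cap \parcut{G}{v}|$ is a positive integer, so $|V(G)| \le \sum_{v \in V(G)} |F \cap \parcut{G}{v}| \le 2|F|$, giving $|F| \ge |V(G)|/2$. Thus a $1$-factor attains the minimum possible number of edges among all joins and is therefore a minimum join; in particular $\nu(G, V(G)) = |V(G)|/2$.

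For the reverse direction, suppose $M$ is a minimum join. Because $G$ is factorizable it admits a $1$-factor, which by the previous paragraph is a join with $|V(G)|/2$ edges, so $\nu(G, V(G)) = |V(G)|/2$ and hence $|M| = |V(G)|/2$. The heart of the argument is to turn this extremal size into the pointwise matching condition: from $|V(G)| \le \sum_{v \in V(G)} |M \cap \parcut{G}{v}| \le 2|M| = |V(G)|$ the entire chain collapses to equalities, so $M$ is loopless and the sum of the $|V(G)|$ nonnegative integers $|M \cap \parcut{G}{v}|$, each at least $1$, equals exactly $|V(G)|$. This forces every one of them to equal $1$, which is precisely the defining condition of a $1$-factor. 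The only genuinely delicate step is this forcing of equality, and even that is routine once the count is set up; the sole other care needed is the loop bookkeeping, handled by the standard convention that a matching contains no loop.
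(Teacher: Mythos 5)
Your proof is correct. The paper states this as an \emph{Observation} with no proof at all, so there is nothing to compare against line by line; your double-counting of $\sum_{v\in V(G)}|F\cap \parcut{G}{v}|$ to get the lower bound $|F|\ge |V(G)|/2$ for every join, combined with the forcing of equality for a minimum join, is exactly the routine argument the author is implicitly appealing to, and your attention to the loop convention (without which the statement would actually be false for multigraphs as defined here) is a welcome extra bit of care.
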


\subsection{Factor-Connectivity in Grafts} 

Let $(G, T)$ be a graft. 
An edge $e\in E(G)$ is {\em allowed} in $(G, T)$ if there is a minimum join $F$ with $e\in F$. 
We say that vertices $x, y\in V(G)$ are {\em factor-connected} in $(G, T)$ 
if $x$ and $y$ are identical, or there is a path between $x$ and $y$ whose edges are all allowed. 
We say that a graft is {\em factor-connected} if every two vertices are factor-connected. 
A {\em factor-connected component} or {\em factor-component} of a graft is a maximal factor-connected subgraft. 
We denote the set of factor-components of $(G, T)$ by $\tcomp{G}{T}$. 
It is easily confirmed from the definition that 
 a graft consists of its factor-components, which are mutually disjoint, and edges between distinct factor-components, 
which are non-allowed.

\subsection{Distances in Graphs} 

Let $(G, T)$ be a graft. 
For $F\subseteq E(G)$, 
we define $w_F: E(G)\rightarrow \{1, -1\}$ as such that $w_F(e) = -1$ for $e\in F$, whereas $w_F(e) = 1$ for $e\in E(G)\setminus F$. 
For $S\subseteq E(G)$, we define $w_F(S) := \Sigma_{e\in S} w_F(e)$.  
For a subgraph $H$ of $G$, which is typically a path or circuit, 
we define $w_F(H) := w_F(E(H))$. 

Let $x, y\in V(G)$. 
If $x\neq y$, we define $\distgtf{G}{T}{F}{x}{y}$ as the minimum value $w_F(P)$, where $P$ is taken over all path between $x$ and $y$. 
If $x = y$,  $\distgtf{G}{T}{F}{x}{y}$ is defined to be the minimum value $w_F(P)$, 
where $P$ is taken over all circuits that contains $x$ or $y$.  
We also call the value $w_F(P)$ as the $F$-weight of $P$. 
We call a path that attains the value $\distgtf{G}{T}{F}{x}{y}$  an {\em $F$-shortest} path between $x$ and $y$. 
Regarding these three definitions, we often omit ``$F$-'' if the meaning is apparent from the context. 

The following characteristic properties hold for minimum joins.  

\begin{lemma}[see Seb\"o~\cite{DBLP:journals/jct/Sebo90}] 
Let $F \subseteq E(G)$ be a join of a graft $(G, T)$. 
If $C$ is a circuit of $G$, then $F\Delta E(C)$ is also a join of $(G, T)$. 
Accordingly, if $F$ is a minimum join, then $w_F(C) \ge 0$ holds for every circuit $C$; 
consequently,  $\distgtf{G}{T}{F}{x}{x} = 0$ holds for every $x\in V(G)$. 
\end{lemma}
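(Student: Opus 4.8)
The plan is to treat the three assertions in turn; the first two carry the mathematical content and the third follows formally. First I would show that $F \Delta E(C)$ is again a join, which only requires checking the defining degree-parity condition at each vertex. The key observation is that any circuit $C$ meets the edge set $\parcut{G}{v}$ incident to a vertex $v$ in an even number of edges: either $v \notin V(C)$, so the intersection is empty, or $v \in V(C)$, in which case exactly two edges of $C$ are incident to $v$ by the degree-two property of circuits. Since $|A \Delta B| \equiv |A| + |B| \pmod 2$ for finite sets, we obtain
\begin{equation*}
|(F \Delta E(C)) \cap \parcut{G}{v}| \equiv |F \cap \parcut{G}{v}| + |E(C) \cap \parcut{G}{v}| \equiv |F \cap \parcut{G}{v}| \pmod 2,
\end{equation*}
using that $|E(C) \cap \parcut{G}{v}|$ is even. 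Hence the parity of $|F \cap \parcut{G}{v}|$ is unchanged at every vertex, so $F \Delta E(C)$ satisfies the join condition exactly when $F$ does.

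Next I would derive $w_F(C) \ge 0$ for a minimum join $F$. By the first part $F \Delta E(C)$ is a join, so minimality of $F$ gives $|F| \le |F \Delta E(C)|$. Writing $a := |F \cap E(C)|$ for the number of edges of $C$ lying in $F$, a direct count of the disjoint union $(F\setminus E(C)) \cup (E(C)\setminus F)$ gives $|F \Delta E(C)| = |F| + |E(C)| - 2a$, hence $|F \Delta E(C)| - |F| = |E(C)| - 2a$. On the other hand, by the definition of $w_F$ the edges of $C$ inside $F$ contribute $-1$ each and those outside contribute $+1$ each, so $w_F(C) = -a + (|E(C)| - a) = |E(C)| - 2a$. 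Combining the two identities yields $w_F(C) = |F \Delta E(C)| - |F| \ge 0$, which is the crux of the argument: it identifies the $F$-weight of $C$ with the size increment incurred by toggling $C$.

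Finally, $\distgtf{G}{T}{F}{x}{x} = 0$ is immediate: by the previous paragraph every circuit through $x$ has nonnegative $F$-weight, so the minimizing value is at least $0$, while the degenerate edgeless closed walk at $x$ has $F$-weight $w_F(\emptyset) = 0$, giving the reverse inequality. I expect the only delicate point to be this last step rather than any computation. A genuine circuit through $x$ need not exist (for instance at a leaf of $G$), so the equality is really the combination of the structural inequality of the second part with the convention that the minimization admits the trivial weight-zero closed walk at $x$; I would make sure the reading of $\distgtf{G}{T}{F}{x}{x}$ used here is exactly the one fixed by the earlier definition.
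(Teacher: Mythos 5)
Your proof is correct, and there is no in-paper argument to compare it against: the paper states this lemma without proof, attributing it to Seb\"o. Your argument is the standard one --- the parity computation at each $\parcut{G}{v}$ for the first assertion, the identity $w_F(C) = |F\Delta E(C)| - |F|$ combined with minimality for the second --- and you are right to flag the final equality $\distgtf{G}{T}{F}{x}{x}=0$ as the one delicate point, since a genuine circuit of weight zero through $x$ need not exist (e.g.\ a triangle with $F=\emptyset$ gives minimum circuit weight $3$), so the paper's definition of the self-distance must be read as admitting the trivial edgeless closed walk at $x$.
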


\begin{lemma}[Seb\"o~\cite{DBLP:journals/jct/Sebo90}] 
Let $(G, T)$ be a graft, and let $F_1, F_2 \subseteq E(G)$ be minimum joins of $(G, T)$. 
Then, $\distgtf{G}{T}{F_1}{x}{y} = \distgtf{G}{T}{F_2}{x}{y}$ holds for all $x,y\in V(G)$.  
\end{lemma}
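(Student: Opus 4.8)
The plan is to prove $\distgtf{G}{T}{F_2}{x}{y} \le \distgtf{G}{T}{F_1}{x}{y}$; the reverse inequality then follows immediately by exchanging the roles of $F_1$ and $F_2$, giving equality. First I would dispose of the case $x = y$: there the preceding lemma already yields $\distgtf{G}{T}{F_1}{x}{x} = 0 = \distgtf{G}{T}{F_2}{x}{x}$, so it remains only to treat $x \neq y$.

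The central structural observation concerns $D := F_1 \Delta F_2$. Since $F_1$ and $F_2$ are both joins of $(G, T)$, the parities of $|F_1 \cap \parcut{G}{v}|$ and $|F_2 \cap \parcut{G}{v}|$ agree at every vertex $v$ (both are odd exactly when $v \in T$), so every vertex has even degree in $D$; hence $D$ is an edge-disjoint union of circuits. For each such circuit $C$, every edge of $C$ lies in exactly one of $F_1, F_2$, which forces $w_{F_1}(e) = -w_{F_2}(e)$ for each $e \in E(C)$ and therefore $w_{F_1}(C) = -w_{F_2}(C)$. Because $F_1$ and $F_2$ are minimum joins, the preceding lemma gives $w_{F_1}(C) \ge 0$ and $w_{F_2}(C) \ge 0$, so in fact $w_{F_1}(C) = w_{F_2}(C) = 0$. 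Summing over the circuit decomposition yields the key identities $w_{F_2}(D) = 0$, together with $w_{F_1}(e) = -w_{F_2}(e)$ for every $e \in D$ and $w_{F_1}(e) = w_{F_2}(e)$ for every $e \notin D$.

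Next I would take an $F_1$-shortest $x$--$y$ path $P$, so $w_{F_1}(P) = \distgtf{G}{T}{F_1}{x}{y}$, and set $J := E(P) \Delta D$. Computing degrees modulo two and using that $D$ has all even degrees, the only odd-degree vertices of $J$ are $x$ and $y$; hence $x$ and $y$ lie in the same component of $J$, and $J$ contains an $x$--$y$ path $Q$. Since $J \setminus E(Q)$ then has all even degrees, it is an edge-disjoint union of circuits, each of nonnegative $F_2$-weight by the preceding lemma, so $w_{F_2}(Q) \le w_{F_2}(J)$. Finally, partitioning $E(J)$ according to membership in $D$ and applying the identities above collapses the edge-by-edge comparison to $w_{F_2}(J) - w_{F_1}(P) = \sum_{e \in D} w_{F_2}(e) = w_{F_2}(D) = 0$. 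Combining, $\distgtf{G}{T}{F_2}{x}{y} \le w_{F_2}(Q) \le w_{F_2}(J) = w_{F_1}(P) = \distgtf{G}{T}{F_1}{x}{y}$, as desired.

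I expect the main obstacle to be the bookkeeping in the last step, namely verifying cleanly that $w_{F_2}(J) = w_{F_1}(P)$. The convenient route is to track the difference $w_{F_2}(J) - w_{F_1}(P)$ directly: edges outside $D$ contribute nothing (their weights agree under $F_1$ and $F_2$ and the symmetric difference with $D$ does not touch them), and the edges of $D$, split into $E(P) \cap D$ and $D \setminus E(P)$, reassemble exactly into $w_{F_2}(D)$. This reduces the whole argument to the single fact that every circuit in $D$ has $F_2$-weight zero, which was established in the second paragraph.
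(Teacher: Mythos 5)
The paper does not prove this lemma---it is stated as a citation to Seb\"o---so there is no in-paper argument to compare against; your proof is correct and is essentially the standard argument for this fact. The key points (the symmetric difference $F_1\Delta F_2$ has all degrees even and so decomposes into circuits, each of which must have weight zero under both joins by minimality; and the symmetric difference of an $F_1$-shortest $x$--$y$ path with $F_1\Delta F_2$ is an edge set of equal $F_2$-weight consisting of an $x$--$y$ path plus circuits of nonnegative $F_2$-weight) are all verified correctly, including the degree-parity argument placing $x$ and $y$ in the same component of $J$ and the final bookkeeping identity $w_{F_2}(J)-w_{F_1}(P)=w_{F_2}(D)=0$.
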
 

That is, the above lemma states that distances between two vertices do not depend on the choice of a minimum join.  
Therefore,  $\distgtf{G}{T}{F}{x}{y}$ can be denoted by $\distgt{G}{T}{x}{y}$.  
We usually employ this simplified notation in the remainder of this paper.

The next lemma is easily observed from results provided in Seb\"o~\cite{DBLP:journals/jct/Sebo90}. 
See, e.g., Kita~\cite{kita2017parity} for the derivation of this lemma. 
We use this lemma in  later sections.  

\begin{lemma}[see Kita~\cite{kita2017parity}] \label{lem:elem2nonposi}
If $(G, T)$ is a factor-connected graft, then $\distgt{G}{T}{x}{y} \le 0$ holds for every $x, y\in V(G)$.  
\end{lemma}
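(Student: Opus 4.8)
The plan is to reduce the claim to two ingredients: that $\lambda$ obeys the triangle inequality, and that each allowed edge contributes nonpositively. The case $x = y$ is immediate, since the preceding results give $\distgt{G}{T}{x}{x} = 0$, so assume $x \neq y$. Because $(G,T)$ is factor-connected, there is a path $Q$ from $x$ to $y$, say $x = v_0, v_1, \ldots, v_k = y$, every edge of which is allowed. I would then argue that $\distgt{G}{T}{x}{y} \le \sum_{i=1}^{k} \distgt{G}{T}{v_{i-1}}{v_i}$ and that each summand is nonpositive, which yields the result.

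For the triangle inequality, fix a minimum join $F$; by the distance-independence lemma quoted above it does not matter which one. The key input is that $w_F(C) \ge 0$ for every circuit $C$, established in the first Seb\"o lemma. From this I would first deduce that every closed walk has nonnegative $F$-weight: the edge multiset of a closed walk induces a subgraph in which every vertex has even degree, so it decomposes into circuits, each of nonnegative weight. Consequently, for any walk $W$ from $a$ to $c$ one may extract a path $R$ from $a$ to $c$ whose edges form a sub-multiset of those of $W$; the remaining edges again form an even subgraph, hence a union of circuits, so $w_F(W) \ge w_F(R) \ge \distgt{G}{T}{a}{c}$. Applying this to the concatenation of an $F$-shortest $a$--$b$ path and an $F$-shortest $b$--$c$ path gives $\distgt{G}{T}{a}{c} \le \distgt{G}{T}{a}{b} + \distgt{G}{T}{b}{c}$. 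Iterating along $Q$ then produces the displayed inequality.

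For the second ingredient, let $uv$ be any allowed edge. By definition there is a minimum join $F'$ with $uv \in F'$, so $w_{F'}(uv) = -1$. Since the single edge $uv$ is a path from $u$ to $v$, we get $\distgtf{G}{T}{F'}{u}{v} \le -1$, and by distance-independence $\distgt{G}{T}{u}{v} = \distgtf{G}{T}{F'}{u}{v} \le -1 < 0$. Note that an allowed edge is never a loop, since a loop in a minimum join would be a circuit of weight $-1$, contradicting circuit-nonnegativity; hence $u \neq v$ and the single-edge path is legitimate. Summing over the edges of $Q$ then gives $\distgt{G}{T}{x}{y} \le 0$.

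I expect the main obstacle to be the clean justification of the triangle inequality, specifically the walk-to-path reduction in the multigraph setting: one must verify that discarding a path from a walk leaves an even subgraph that decomposes into circuits, so that circuit-nonnegativity can be invoked. Everything else is a direct consequence of the two Seb\"o lemmas together with factor-connectivity.
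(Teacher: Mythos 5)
The paper itself gives no proof of this lemma (it defers to Kita~\cite{kita2017parity}, noting it follows from Seb\"o's results), so your attempt has to stand on its own. Its skeleton --- handle $x=y$ via $\distgt{G}{T}{x}{x}=0$, show every allowed edge $uv$ satisfies $\distgt{G}{T}{u}{v}\le -1$ by exhibiting a minimum join $F'$ containing $uv$, and then chain these bounds along an allowed path using a triangle inequality --- is reasonable, and the first and third ingredients are argued correctly. The gap is exactly where you suspected it: the walk-to-path reduction does not survive the passage from edge sets to edge \emph{multisets}. The concatenation $W$ of an $F$-shortest $a$--$b$ path and an $F$-shortest $b$--$c$ path may traverse the same edge of $G$ twice. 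After you extract an $a$--$c$ path $R$ from the multiset $E(W)$, the leftover is an even-degree multiset, and its decomposition into closed pieces can contain a doubled edge $e$ traversed twice; that piece is not a circuit of $G$ (as a subgraph it is a single edge with both endvertices of degree one), so the hypothesis $w_F(C)\ge 0$ for circuits does not apply to it, and its contribution is $2w_F(e)=-2$ when $e\in F$. Hence the inequality $w_F(W)\ge w_F(R)$, and with it your claim that every closed walk has nonnegative $F$-weight, is simply false for walks with repeated edges (a walk $u\to v\to u$ along a single edge of $F$ has weight $-2$). The obvious repair --- replace the union by the symmetric difference $E(P)\Delta E(Q)$, which genuinely is a subgraph decomposing into an $a$--$c$ path plus circuits --- runs into the same wall, since $w_F(P)+w_F(Q)=w_F(E(P)\Delta E(Q))+2\,w_F(E(P)\cap E(Q))$ and the sign of $w_F(E(P)\cap E(Q))$ is not obviously nonnegative.

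The triangle inequality for $\distgt{G}{T}{\cdot}{\cdot}$ is in fact true, but the standard route to it goes through Seb\"o's identity $\distgtf{G}{T}{F}{u}{v}=\nu(G,T\Delta\{u,v\})-\nu(G,T)$ and the metric property of the right-hand side (an exchange argument on joins of the modified grafts), not through decomposing a walk into circuits. So either import that identity explicitly and derive the triangle inequality from it, or restructure the argument to avoid concatenating shortest paths altogether; as written, the central step of your proof is not established.
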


\section{General Kotzig-Lov\'asz Decomposition for Grafts} 

In this section, we introduce a structure theorem  known as the {\em general Kotzig-Lov\'asz decomposition} for grafts~\cite{kita2017parity}.  
This decomposition is  uniquely determined for a graft and describes the structure of minimum joins. 
This decomposition has an important part in Section~\ref{sec:attribute}. 

\begin{definition} 
Let $(G, T)$ be a graft. 
For $x, y\in V(G)$, we say that $x \sim_{(G, T)} y$  if $x$ and $y$ are contained in the same factor-component and $\distgt{G}{T}{x}{y} = 0$ holds. 
\end{definition}

\begin{theorem}[Kita~\cite{kita2017parity}] \label{thm:tkl}  
Let $(G, T)$ be a graft.  
Then, $\sim_{(G, T)}$ is an equivalence relation over $V(G)$. 
\end{theorem}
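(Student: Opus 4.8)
The plan is to verify the three axioms of an equivalence relation for $\gtsim{G}{T}$, fixing throughout a minimum join $F$ and writing $\distgt{G}{T}{x}{y}$ for the well-defined distance. Reflexivity and symmetry will be immediate. For reflexivity, every vertex lies in its own factor-component, and the circuit lemma gives $\distgt{G}{T}{x}{x} = 0$. For symmetry, lying in a common factor-component is plainly symmetric, and $\distgt{G}{T}{x}{y} = \distgt{G}{T}{y}{x}$ because a path between $x$ and $y$ is literally the same object as a path between $y$ and $x$. Thus the entire difficulty is transitivity.

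So suppose $x \gtsim{G}{T} y$ and $y \gtsim{G}{T} z$; I must show $x \gtsim{G}{T} z$. The factor-component requirement is clear: since a concatenation of two allowed-edge paths contains an allowed-edge path, membership in a common element of $\tcomp{G}{T}$ is transitive, so $x$ and $z$ lie in a common factor-component, which is itself a factor-connected subgraft. Applying Lemma~\ref{lem:elem2nonposi} to that factor-component gives $\distgt{G}{T}{x}{z} \le 0$. Hence it remains only to prove the reverse inequality $\distgt{G}{T}{x}{z} \ge 0$, that is, that every path $R$ between $x$ and $z$ satisfies $w_F(R) \ge 0$. The degenerate cases in which two of $x, y, z$ coincide are trivial, so I assume them distinct.

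The engine of the argument is the circuit lemma combined with a symmetric-difference parity count. Let $P$ be a shortest path between $x$ and $y$, so $w_F(P) = \distgt{G}{T}{x}{y} = 0$, and let $R$ be an arbitrary path between $x$ and $z$. In the subgraph of $G$ with edge set $E(R) \Delta E(P)$, a degree count shows that exactly $y$ and $z$ are odd (the common endpoint $x$ is glued to even degree), so this subgraph decomposes into a single path $R'$ between $y$ and $z$ together with edge-disjoint circuits. The circuit lemma gives $w_F(C) \ge 0$ for each such circuit $C$, and $w_F(R') \ge \distgt{G}{T}{y}{z} = 0$; summing these contributions yields $w_F(E(R) \Delta E(P)) \ge 0$. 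Since $w_F(E(R) \Delta E(P)) = w_F(R) + w_F(P) - 2\,w_F(E(R) \cap E(P)) = w_F(R) - 2\,w_F(E(R) \cap E(P))$, I obtain $w_F(R) \ge 2\,w_F(E(R) \cap E(P))$.

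The step I expect to be the main obstacle is precisely this correction term $2\,w_F(E(R) \cap E(P))$: the symmetric difference discards the edges shared by $R$ and $P$, and those edges carry no a priori sign, so the clean conclusion $w_F(R) \ge 0$ is immediate only when $R$ and $P$ are edge-disjoint. To remove the obstruction I would argue by uncrossing, reducing to the edge-disjoint case. The leverage is that every subpath of the $F$-shortest path $P$ is again $F$-shortest, and that I am free to choose $P$ among all zero-weight paths between $x$ and $y$ and to play it off against the zero-weight path $Q$ between $y$ and $z$; along any maximal shared segment one reroutes so as to strictly decrease the overlap $|E(R) \cap E(P)|$ without increasing $w_F(R)$, and then iterates. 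Once $R$ has been replaced by an edge-disjoint competitor $\hat{R}$ with $w_F(\hat{R}) \le w_F(R)$, the displayed inequality gives $w_F(\hat{R}) \ge 0$, whence $w_F(R) \ge 0$. This establishes $\distgt{G}{T}{x}{z} \ge 0$, so $\distgt{G}{T}{x}{z} = 0$ and $x \gtsim{G}{T} z$, completing transitivity. A technically smoother alternative to the explicit uncrossing is to take a counterexample $R$ minimizing $|E(R) \cap E(P)|$ (or total length) and derive a contradiction by the same exchange.
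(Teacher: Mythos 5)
This theorem is imported from \cite{kita2017parity}; the paper contains no proof of it, so there is nothing internal to compare your argument against. Judged on its own, your setup is fine as far as it goes: reflexivity and symmetry are immediate, the factor-component half of transitivity is correct, the upper bound $\distgt{G}{T}{x}{z}\le 0$ via Lemma~\ref{lem:elem2nonposi} is legitimate (modulo the standard fact, used freely elsewhere in the paper, that restricting a minimum join to a factor-component gives a minimum join of that subgraft), and the symmetric-difference computation $w_F(R)\ge 2\,w_F(E(R)\cap E(P))$ is accurate. You have also correctly located the obstacle.

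But locating the obstacle is not the same as removing it, and the removal is where the entire content of the theorem lives. Your proposed fix --- ``along any maximal shared segment one reroutes so as to strictly decrease the overlap $|E(R)\cap E(P)|$ without increasing $w_F(R)$'' --- is an assertion, not an argument. No rerouting is specified; $R$ and $P$ may share many segments appearing in incompatible orders along the two paths, a local splice of $R$ through pieces of $P$ need not produce a path at all, and there is no reason offered why any splice that does produce a path cannot increase $w_F(R)$ (the shared edges you are trying to discard are exactly the ones whose sign you cannot control, since they may lie in $F$). The same uncontrolled correction term reappears if you try to prove the auxiliary facts you would lean on (e.g.\ that subpaths of $F$-shortest paths are $F$-shortest), so the argument is circular at its core rather than merely incomplete. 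The known proofs do not proceed by such local uncrossing: they go through Seb\"o's identity $\nu(G, T\Delta\{x,y\}) = \nu(G,T) + \distgt{G}{T}{x}{y}$ and the structure of minimum joins of the modified grafts $(G, T\Delta\{x,y\})$, which is the mechanism that actually absorbs the overlap term. As it stands, your proof establishes only $w_F(R)\ge 0$ for paths $R$ edge-disjoint from a chosen zero-weight $x$--$y$ path, and the reduction to that case is a genuine gap.
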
  

Under Theorem~\ref{thm:tkl}, we denote the family of equivalence classes of $\sim_{(G, T)}$ by $\tpart{G}{T}$. 
This structure is called the {\em general Kotzig-Lov\'asz decomposition} for grafts.  
For each $C\in\tcomp{G}{T}$, the family of  equivalence classes that share vertices with $C$ is a partition of $V(C)$. 
Hence, we denote by $\pargtpart{G}{T}{C}$ the family of equivalence classes that are contained in $V(C)$. 
The subgraft $(C, T\cap V(C))$ also has its general Kotzig-Lov\'asz decomposition $\tpart{C}{T\cap V(C)}$. 
It is easily confirmed from the definition that $\pargtpart{G}{T}{C}$ is generally a proper refinement of $\tpart{C}{T\cap V(C)}$.

\section{Comb-Bipartite Grafts} 

In this section, we  introduce the concept of {\em comb-bipartite} grafts and some lemmas to be used in the later sections. 
Comb-bipartite grafts are closely related to the concept of comb-critical towers that have been introduced in Seb\"o~\cite{DBLP:journals/jct/Sebo90}. 
This class of towers or grafts is important in describing the structure of general grafts 
because they serve as  skeletons of general grafts. 
Particularly, any bipartite grafts can be recursively decomposed into this class of towers and grafts.

\begin{definition} 
Let $(G, T)$ be a bipartite graft, for which $A$ and $B$ are color classes of $G$. 
We say that $(G, T)$ is a {\em comb-bipartite} graft 
with {\em spine set} $A$ and {\em tooth set} $B$ if $\nu(G, T) = |B|$. 
\end{definition}

The next characterization for comb-bipartite grafts can be easily confirmed. 
\begin{lemma}[see Seb\"o~\cite{DBLP:journals/jct/Sebo90} or Kita~\cite{kita2017parity}] 
Let $(G, T)$ be a bipartite graft with color classes $A$ and $B$.   
Then, the following three statements are equivalent: 
\begin{rmenum} 
\item $(G, T)$ is a comb-bipartite graft with spine set $A$ and tooth set $B$. 
\item There exists a minimum join $F$ of $(G, T)$ with $|F\cap \parcut{G}{v} | = 1$ for every $v\in B$. 
\item   $|F\cap \parcut{G}{v} | = 1$ holds for every minimum join $F$ of $(G, T)$ and every $v\in B$. 
\end{rmenum} 
\end{lemma}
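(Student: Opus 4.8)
The engine of the proof will be a single bipartite counting identity. Since $G$ is bipartite with color classes $A$ and $B$, every edge has exactly one end in $B$; consequently, for any $F \subseteq E(G)$, summing the contributions edge-by-edge gives $|F| = \sum_{v \in B} |F \cap \parcut{G}{v}|$. With this in hand, the two easy implications fall out immediately. For (iii)$\Rightarrow$(ii), I note only that a graft always possesses a minimum join, so a property holding for every minimum join holds for some minimum join. For (ii)$\Rightarrow$(i), I take the minimum join $F$ furnished by (ii) and substitute it into the identity: each of the $|B|$ summands equals $1$, so $\nu(G, T) = |F| = |B|$, which is precisely the defining condition in (i).

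The content of the lemma lies in (i)$\Rightarrow$(iii). Here I fix an arbitrary minimum join $F$; by (i) we have $|F| = \nu(G, T) = |B|$, and the identity then reads $\sum_{v \in B} |F \cap \parcut{G}{v}| = |B|$. This exhibits $|B|$ as a sum of $|B|$ nonnegative integers, so the desired conclusion that $|F \cap \parcut{G}{v}| = 1$ for every $v \in B$ is equivalent to showing that each summand is at least $1$: once every term is known to be positive, the only way $|B|$ positive integers can sum to $|B|$ is for each to equal $1$. Thus the whole statement reduces to a single assertion, namely that no tooth is left uncovered by a minimum join.

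I expect this positivity step to be the one point requiring the comb hypothesis rather than mere bipartiteness. The clean way to secure it is to use that the teeth are terminals, that is, $B \subseteq T$: for a join $F$ and any $v \in T$, the quantity $|F \cap \parcut{G}{v}|$ is odd by the definition of a join and hence at least $1$. Granting this, every $v \in B$ contributes an odd, positive amount to the sum, and the counting argument of the previous paragraph closes at once; indeed it shows more, namely that \emph{every} join places odd degree on each tooth, with minimality pinning that odd value down to exactly $1$. This is the place where the structure of a comb genuinely enters, and it is the only step that is not pure bookkeeping; I would therefore treat the verification that each tooth is a $T$-vertex (so that the per-tooth lower bound of $1$ is available) as the crux, after which the three implications interlock to yield the stated equivalence.
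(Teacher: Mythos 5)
Your counting identity $|F|=\sum_{v\in B}|F\cap\parcut{G}{v}|$ and the two easy implications (iii)$\Rightarrow$(ii)$\Rightarrow$(i) are fine. The problem is exactly the step you yourself single out as the crux: you reduce (i)$\Rightarrow$(iii) to the assertion that every $v\in B$ meets every minimum join, and you propose to obtain this from ``$B\subseteq T$'' --- but $B\subseteq T$ is neither a stated hypothesis nor a consequence of the definition in force here, which is only $\nu(G,T)=|B|$. So the one nontrivial step is left unproven, and it cannot be recovered from the stated hypotheses alone. Concretely, take $A=\{a_1,a_2\}$, $B=\{b_1,b_2\}$, $E(G)=\{a_1b_1,\,a_2b_1,\,a_1b_2\}$, and $T=\{a_1,a_2\}$. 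Checking all eight edge subsets shows the unique join is $F=\{a_1b_1,a_2b_1\}$, so $\nu(G,T)=2=|B|$ and (i) holds, yet $|F\cap\parcut{G}{b_2}|=0$ and $|F\cap\parcut{G}{b_1}|=2$, so (ii) and (iii) fail. The positivity of each summand is therefore the entire content of the lemma, not a bookkeeping step, and your route to it (each tooth is a $T$-vertex) is a claim you neither prove nor could prove from $\nu(G,T)=|B|$ alone.

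For what it is worth, the paper offers no proof to compare against: the lemma is stated as ``easily confirmed'' with citations to Seb\"o and Kita, in whose settings the teeth do lie in $T$. If $B\subseteq T$ is added as a hypothesis (or is implicit in the intended definition of a comb-bipartite graft), your argument closes immediately and is surely the intended one; as written, however, there is a genuine gap resting on an intermediate claim that the stated hypotheses do not support.
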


\begin{definition} 
Let $(G, T)$ be a comb-bipartite graft with spine set $A$ and tooth set $B$.  
A path or circuit $P$  is {\em $F$-balanced} if $|\parcut{P}{x}| \ge 2$ implies 
$|\parcut{P}{x} \cap F | = |\parcut{P}{x} \setminus F| = 1$ for every $x\in V(P)$. 
\end{definition} 

The concept of $F$-balanced paths frequently shows up when discussing $F$-shortest paths between comb-bipartite grafts. 
The next four lemmas are easily confirmed, and we use these lemmas in later sections sometimes without explicitly mentioning it.

\begin{lemma} \label{lem:balanced} 
Let $(G, T)$ be a comb-bipartite graft with spine set $A$ and tooth set $B$. 
Let $F$ be a minimum join of $(G, T)$. 
Let $x, y \in V(G)$, and let $P$ be an $F$-balanced path between $x$ and $y$ such that $E(P) \neq \emptyset$. 
Let $e_x$ and $e_y$ be the edges of $P$ that are connected to $x$ and $y$, respectively. 
\begin{rmenum} 
\item If $x, y\in A$ holds, then $w_F(P) = 0$. 
\item Let $x\in A$ and $y\in B$. If $e_y \in F$ holds, then $w_F(P) = -1$. 
If $e_y \not\in F$ holds, then $w_F(P) = 1$. 
\item Let $x, y \in B$. If $e_x, e_y \in F$ holds, then $w_F(P) = -2$. 
If $|\{e_x, e_y\} \cap F| = 1$ holds, then $w_F(P) = 0$. 
If $e_x, e_y\not\in F$ holds, then $w_F(P) = 2$. 
\end{rmenum} 
\end{lemma}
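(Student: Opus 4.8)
The plan is to extract a single structural fact from the hypothesis and then reduce every case to an alternating-sign count. First I would observe that $F$-balancedness determines the behaviour of $F$ along $P$ completely: every internal vertex $v$ of $P$ has $|\parcut{P}{v}| = 2$, so the defining condition forces exactly one of the two edges of $P$ at $v$ to lie in $F$. Writing $P$ as $x = v_0, v_1, \ldots, v_k = y$ with edges $e_1, \ldots, e_k$, so that $e_x = e_1$ and $e_y = e_k$, and reading the condition across the successive internal vertices $v_1, \ldots, v_{k-1}$, I obtain that membership in $F$ alternates along $e_1, \ldots, e_k$. Equivalently, the values $w_F(e_1), \ldots, w_F(e_k)$ form an alternating sequence of $+1$'s and $-1$'s.

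Next I would fix the parity of $k$ using bipartiteness. Since $G$ is bipartite with colour classes $A$ and $B$, the colours of $v_0, \ldots, v_k$ alternate as well; hence $k$ is even when $x$ and $y$ lie in the same class and odd when they lie in different classes. Combining this with the alternating sign pattern, the sum $w_F(P) = \sum_{i=1}^{k} w_F(e_i)$ collapses: for even $k$ the $\pm 1$ terms cancel in pairs and $w_F(P) = 0$, while for odd $k$ all but one term cancels and $w_F(P) = w_F(e_1) = w_F(e_k)$, the common value of the two end edges (which have equal $F$-status precisely because $k$ is odd).

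With these two facts the three cases fall out directly. For (i), with $x, y \in A$ we have $k$ even, so $w_F(P) = 0$. For (ii), with $x \in A$ and $y \in B$ we have $k$ odd, so $w_F(P) = w_F(e_y)$, which is $-1$ if $e_y \in F$ and $+1$ otherwise. For (iii), with $x, y \in B$ we again have $k$ even, so $w_F(P) = 0$. The one point that needs care here is the reconciliation with the sub-cases stated in (iii): alternation together with the even length of $P$ forces $e_x$ and $e_y$ to have opposite $F$-status, so the hypotheses $e_x, e_y \in F$ and $e_x, e_y \notin F$ are never met under the $F$-balanced assumption. Those two implications therefore hold vacuously, while the only realizable sub-case is $|\{e_x, e_y\} \cap F| = 1$, for which the computed value $w_F(P) = 0$ agrees with the statement.

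I would expect the only genuine obstacle to be this last bookkeeping in case (iii): one must notice that bipartite parity and the balancedness condition are jointly restrictive enough to exclude the two extreme sub-cases, rather than attempting to realize the values $\pm 2$ directly, which an alternating sum of $\pm 1$'s can never produce. Once the alternation-plus-parity mechanism is isolated, the remainder is a routine sign count, consistent with the paper's remark that the lemma is easily confirmed.
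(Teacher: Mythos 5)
Your alternation step is where this goes astray. Reading the $F$-balanced condition as a constraint at \emph{every} internal vertex forces $w_F(e_1),\ldots,w_F(e_k)$ to alternate, and from there you correctly conclude that $w_F(P)\in\{-1,0,1\}$, so that the sub-cases $e_x,e_y\in F$ and $e_x,e_y\notin F$ of (iii) become vacuous. But those sub-cases are the substantive content of the lemma, not degenerate ones: Lemma~\ref{lem:combpath} asserts that the bound $\distgt{G}{T}{x}{y}\ge -2$ for two teeth is attained by $F$-balanced paths, Lemma~\ref{lem:incomppath} produces pairs of teeth at distance $-2$ inside a factor-component, and the proof of Lemma~\ref{lem:relativepath} opens by declaring a path of $F$-weight $-2$ between teeth to be $F$-balanced. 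All of this is incompatible with your conclusion. A concrete instance: let $G$ be the path $b_1 a b_2$ with $T=\{b_1,b_2\}$, $A=\{a\}$, $B=\{b_1,b_2\}$. The unique (hence minimum) join is $F=\{b_1a,\,ab_2\}$, so $(G,T)$ is comb-bipartite and the whole path has weight $-2$ with both end edges in $F$; this is exactly the kind of path case (iii) is describing, yet it has two $F$-edges meeting at $a$ and so fails your alternation property.

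The resolution is that the balance condition is only effective at tooth vertices: in a comb-bipartite graft every $v\in B$ meets exactly one edge of $F$ in all of $G$, so at an internal $v\in B$ the condition says precisely that this unique $F$-edge lies on $P$, whereas at internal vertices in $A$ no alternation may be imposed (the quantifier ``for every $x\in V(P)$'' in the paper's definition has to be read as ranging over $B$ for the surrounding lemmas to be consistent). With that reading the proof is the pairing you almost wrote, anchored at teeth instead of at all internal vertices: the two path-edges meeting each internal tooth vertex form a pair of total weight $0$, and these pairs partition $E(P)$ except for the end edge at each end that lies in $B$. Hence $w_F(P)=0$ when $x,y\in A$; $w_F(P)=w_F(e_y)$ when $x\in A$ and $y\in B$; and $w_F(P)=w_F(e_x)+w_F(e_y)$ when $x,y\in B$, which yields all three values $-2$, $0$, $2$ of case (iii) non-vacuously. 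Your computations for (i) and (ii) happen to return the right values, but the mechanism you use (cancellation at every internal vertex) is not the one that survives in case (iii), so the proof as written does not establish the lemma in the form the rest of the paper relies on.
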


\begin{lemma} \label{lem:combpath} 
Let $(G, T)$ be a comb-bipartite graft with spine set $A$ and tooth set $B$. 
Then, 
\begin{rmenum} 
\item for any $x\in A$ and any $y\in B$, $\distgt{G}{T}{x}{y} \ge -1$; 
\item for any $x\in A$ and any $y\in A$, $\distgt{G}{T}{x}{y} \ge 0$; 
\item for any $x\in B$ and any $y \in B$, $\distgt{G}{T}{x}{y} \ge -2$. 
\end{rmenum} 
In each case, the equality is satisfied by $F$-balanced paths between $x$ and $y$. 
\end{lemma}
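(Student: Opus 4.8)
The plan is to prove each lower bound by showing that an arbitrary path $P$ between $x$ and $y$ cannot have $F$-weight below the stated value, and then to exhibit $F$-balanced paths attaining equality. Fix a minimum join $F$ of $(G, T)$. The key structural fact I would use is the comb-bipartite characterization lemma: for every minimum join $F$ and every tooth vertex $v \in B$, exactly one edge of $\parcut{G}{v}$ lies in $F$. Thus every vertex of $B$ that is \emph{internal} to $P$ (degree two in $P$) meets exactly one $F$-edge and one non-$F$-edge of $\parcut{G}{v}$; in particular, if both edges of $P$ at such an internal $B$-vertex belong to $E(P)$, they contribute $+1$ and $-1$ to $w_F(P)$, cancelling. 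This is precisely the statement that $P$ is automatically $F$-balanced at its internal $B$-vertices. Since $G$ is bipartite with classes $A$ and $B$, every internal vertex of $P$ alternates between $A$ and $B$, so the only places where the balancing argument can fail are the two ends $x$ and $y$.

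The main idea is therefore to reduce the weight of any path to a telescoping sum controlled only by its end-edges. First I would argue that we may assume $P$ is $F$-balanced: at any internal $A$-vertex of degree two the two incident edges are unconstrained, but at every internal $B$-vertex balancing holds automatically by the characterization lemma, so along $P$ the contributions of consecutive edge-pairs sharing a $B$-vertex cancel. Concretely, I would pair up the edges of $P$ by walking along it and grouping each pair of edges incident to a common internal $B$-vertex; each such pair contributes $0$ to $w_F(P)$. What remains uncancelled is determined entirely by the parity of the endpoints and by whether the terminal edges $e_x$ and $e_y$ lie in $F$. This is exactly the content of Lemma~\ref{lem:balanced}, which I would invoke (or reprove in miniature) to evaluate $w_F(P)$ for an $F$-balanced path in each of the three endpoint-type cases.

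With the balanced case computed, the three inequalities follow by minimizing over the free choices at the ends. For (i), $x \in A$ and $y \in B$: balancing leaves only the terminal edge $e_y$ at the tooth vertex $y$ uncancelled (the $A$-end contributes a single edge that pairs with the interior), giving $w_F(P) \in \{-1, +1\}$, whence $\distgt{G}{T}{x}{y} \ge -1$, attained when $e_y \in F$. For (ii), $x, y \in A$: both ends are spine vertices, all $B$-vertices balance, and the leftover contributions cancel in pairs to give $w_F(P) = 0$ for balanced paths, so $\distgt{G}{T}{x}{y} \ge 0$. For (iii), $x, y \in B$: both terminal edges $e_x, e_y$ survive, each contributing $\pm 1$, so the minimum balanced value is $-2$, attained when $e_x, e_y \in F$, giving $\distgt{G}{T}{x}{y} \ge -2$. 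In the degenerate circuit case $x = y$, the earlier lemma already gives weight $0$, consistent with the bounds.

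The step I expect to be the genuine obstacle is justifying rigorously that it suffices to consider $F$-balanced paths, i.e. that an arbitrary (possibly non-balanced) shortest path can be analyzed by the same telescoping argument. The subtlety is that a path need not be balanced at an internal $A$-vertex, and one must be careful that the characterization lemma forces balancing only at $B$-vertices; the bipartite alternation then guarantees that every \emph{internal} vertex where imbalance could cost weight is in fact a $B$-vertex and hence balanced. I would make this precise by induction on the number of edges of $P$, peeling off the two end-edges and applying the characterization lemma at the adjacent $B$-vertex to reduce to a shorter subpath whose ends lie in $A$, thereby reducing cases (i) and (iii) to case (ii). The remaining routine verification is the explicit weight count in Lemma~\ref{lem:balanced}, which I would treat as already available.
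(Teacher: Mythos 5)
The paper offers no written proof of this lemma (it is filed under the ``easily confirmed'' facts), so your argument has to stand on its own. Your overall strategy --- bound the weight of an arbitrary path by pairing its edges at internal tooth vertices and isolating the contribution of the end-edges --- is the right one and is surely the intended argument. But the pivotal step, as you state it, is false. From the characterization lemma you correctly get $|\parcut{G}{v}\cap F|=1$ for every $v\in B$; this does \emph{not} imply that one of the two edges of $P$ incident to an internal $B$-vertex $v$ lies in $F$. The unique $F$-edge of $\parcut{G}{v}$ may be off the path entirely, in which case the two path-edges at $v$ contribute $+1+1=+2$, not $+1-1=0$. So an arbitrary path is \emph{not} ``automatically $F$-balanced at its internal $B$-vertices,'' and the intermediate claims you derive from this (every path between two spine vertices has weight exactly $0$; every $A$--$B$ path has weight in $\{-1,+1\}$) are wrong: a path all of whose edges avoid $F$ has weight equal to its length. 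The reduction ``we may assume $P$ is $F$-balanced'' is therefore unjustified --- and it sits uneasily with your own observation that nothing constrains the two edges at an internal $A$-vertex.

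The repair is small but necessary. What the characterization lemma actually yields is that \emph{at most} one of the two path-edges at an internal $B$-vertex lies in $F$, so each such edge-pair contributes $\ge 0$ rather than $=0$. Since $G$ is bipartite, the vertices of $P$ alternate between $A$ and $B$, so $E(P)$ decomposes into such pairs together with one leftover end-edge (of weight $\ge -1$) for each end of $P$ that lies in $B$. Summing gives $w_F(P)\ge 0$, $\ge -1$, and $\ge -2$ in cases (ii), (i), and (iii) respectively, for \emph{every} path $P$, which is exactly what the three inequalities need; the degenerate case $x=y$ is covered by $\distgt{G}{T}{x}{x}=0$ as you note. The closing assertion about equality for $F$-balanced paths is then precisely Lemma~\ref{lem:balanced}. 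With ``$\ge 0$'' in place of your claimed cancellation at internal $B$-vertices, your telescoping argument goes through verbatim.
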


The next lemma is easily observed from Lemmas~\ref{lem:elem2nonposi} and \ref{lem:combpath}.

\begin{lemma} \label{lem:incomppath} 
 Let $(G, T)$ be a factor-connected comb-bipartite graft with spine set $A$ and tooth set $B$. 
Then, 
\begin{enumerate} 
\item for any $x\in A$ and any $y\in B$, $\distgt{G}{T}{x}{y} = -1$; 
\item for any $x\in A$ and any $y\in A$, $\distgt{G}{T}{x}{y} = 0$; 
\item for any $x\in B$ and any $y \in B$, $\distgt{G}{T}{x}{y}$ is equal to $0$ or $-2$. 
\end{enumerate} 
\end{lemma}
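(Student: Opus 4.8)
The plan is to derive each case by combining the lower bounds from Lemma~\ref{lem:combpath} with the upper bound $\distgt{G}{T}{x}{y} \le 0$ supplied by Lemma~\ref{lem:elem2nonposi}, which applies because $(G, T)$ is assumed factor-connected. The strategy is entirely a matter of intersecting the allowed range of distance values given by these two lemmas and observing which integer values survive; no new structural construction is needed.

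First I would handle case (2), where $x, y \in A$. Lemma~\ref{lem:combpath}(ii) gives $\distgt{G}{T}{x}{y} \ge 0$, while Lemma~\ref{lem:elem2nonposi} gives $\distgt{G}{T}{x}{y} \le 0$. These two bounds immediately force $\distgt{G}{T}{x}{y} = 0$. Next I would treat case (1), where $x \in A$ and $y \in B$. Here Lemma~\ref{lem:combpath}(i) yields $\distgt{G}{T}{x}{y} \ge -1$, so the value lies in $\{-1, 0\}$ after intersecting with the upper bound $\le 0$. To eliminate the value $0$, I would invoke a parity argument: since $G$ is bipartite with color classes $A$ and $B$, every path between a vertex of $A$ and a vertex of $B$ has odd length, so $w_F(P)$ is odd for any such path $P$; hence $\distgt{G}{T}{x}{y}$ is odd and cannot equal $0$, leaving $\distgt{G}{T}{x}{y} = -1$.

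For case (3), where $x, y \in B$, Lemma~\ref{lem:combpath}(iii) gives $\distgt{G}{T}{x}{y} \ge -2$, and combined with the upper bound $\le 0$ this confines the value to $\{-2, -1, 0\}$. The same bipartite parity observation applies: a path between two vertices of $B$ has even length, so $w_F(P)$ is even and $\distgt{G}{T}{x}{y}$ is even, ruling out $-1$. This leaves exactly the two possibilities $0$ and $-2$, as claimed.

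The main thing to verify carefully is the parity claim, which is the only ingredient not already packaged in the cited lemmas. The point is that $w_F(P) = w_F(E(P) \setminus F) - |E(P) \cap F| = |E(P)| - 2|E(P)\cap F|$, so $w_F(P) \equiv |E(P)| \pmod 2$, and in a bipartite graph the parity of $|E(P)|$ is determined by whether the two ends lie in the same or in opposite color classes. I expect this parity step to be the only place requiring genuine attention; the rest is a direct squeeze between the upper and lower bounds already established.
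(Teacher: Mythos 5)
Your proposal is correct and follows essentially the same route the paper intends: the paper gives no explicit proof, stating only that the lemma is ``easily observed from Lemmas~\ref{lem:elem2nonposi} and \ref{lem:combpath},'' which is precisely your squeeze between the two bounds. The bipartite parity observation $w_F(P) \equiv |E(P)| \pmod 2$ that you use to exclude $0$ in case (1) and $-1$ in case (3) is exactly the detail the paper leaves implicit, and you have verified it correctly.
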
 

The next lemma is easily derived from Lemmas~\ref{lem:combpath} and \ref{lem:incomppath}.

\begin{lemma} \label{lem:combtkl} 
 Let $(G, T)$ be a  comb-bipartite graft with spine set $A$ and tooth set $B$, 
 and let $F$ be a minimum join of $(G, T)$. 
 Let $C\in \tcomp{G}{T}$. 
Then, $V(C)\cap A$ is a member of $\tpart{G}{T}$, whereas  $V(C)\cap B$ may be partitioned into multiple members from $\tpart{G}{T}$. 
For any $x, y\in B$, $x\sim_{(G, T)} y$ holds if and only if $\distgt{G}{T}{x}{y} = 0$, 
whereas $x \sim_{(G, T)} y$ does not hold if and  only if  $\distgt{G}{T}{x}{y} = -2$. 
\end{lemma}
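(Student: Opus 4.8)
The plan is to reduce every assertion to two ingredients: the global lower bounds of Lemma~\ref{lem:combpath}, which hold for the whole comb-bipartite graft without any factor-connectivity hypothesis, and the exact component-wise values of Lemma~\ref{lem:incomppath}, which I would apply inside $C$. First I would record the preliminary estimate that, for $x,y\in V(C)$, one has $\distgt{G}{T}{x}{y}\le \lambda(x,y;C,T\cap V(C))$: fixing a minimum join $F$ of $(G,T)$, its restriction $F\cap E(C)$ is a minimum join of the subgraft $(C,T\cap V(C))$, every path inside $C$ is a path of $G$ of the same $F$-weight, and distances are independent of the chosen minimum join. I would also check that $(C,T\cap V(C))$ is itself a factor-connected comb-bipartite graft with spine set $A\cap V(C)$ and tooth set $B\cap V(C)$, since the unique $F$-edge at each $v\in B\cap V(C)$ is allowed and hence lies in $C$; this is what licenses applying Lemma~\ref{lem:incomppath} to $C$.

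Next I would treat the spine. For $x,y\in V(C)\cap A$ the bound $\distgt{G}{T}{x}{y}\ge 0$ from Lemma~\ref{lem:combpath} and the upper estimate $\distgt{G}{T}{x}{y}\le\lambda(x,y;C,T\cap V(C))=0$ from Lemma~\ref{lem:incomppath} force $\distgt{G}{T}{x}{y}=0$, so all such vertices are $\sim_{(G,T)}$-equivalent. For $x\in V(C)\cap A$ and $z\in V(C)\cap B$, the global bound $\ge -1$ together with the exact component value $-1$ forces $\distgt{G}{T}{x}{z}=-1\neq 0$, so no tooth of $C$ is equivalent to a spine vertex of $C$. Since $z\sim_{(G,T)}x$ also requires $z$ to lie in the factor-component $C$, this shows the $\sim_{(G,T)}$-class of any $x\in V(C)\cap A$ is exactly $V(C)\cap A$, giving the first assertion.

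For the teeth I would first establish the dichotomy: for $x,y\in V(C)\cap B$, Lemma~\ref{lem:combpath} gives $\distgt{G}{T}{x}{y}\ge -2$, the preliminary estimate with Lemma~\ref{lem:incomppath}(iii) gives $\distgt{G}{T}{x}{y}\le 0$, and bipartiteness forces the weight of every $x$--$y$ path to be even; hence $\distgt{G}{T}{x}{y}\in\{0,-2\}$. Because $x$ and $y$ already lie in the common factor-component $C$, the definition of $\sim_{(G,T)}$ collapses to the distance condition, so $x\sim_{(G,T)}y$ exactly when $\distgt{G}{T}{x}{y}=0$ and, by the dichotomy, $x\not\sim_{(G,T)}y$ exactly when $\distgt{G}{T}{x}{y}=-2$. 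Finally, since every $\sim_{(G,T)}$-class meeting $V(C)$ is contained in $V(C)$ and, by the spine analysis, is contained either in $A$ or in $B$, the classes other than $V(C)\cap A$ partition $V(C)\cap B$; this part may indeed split into several classes precisely when some tooth pair realizes distance $-2$, yielding the second assertion.

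I would expect the only genuine obstacle to be the bookkeeping between the two distance notions: the lower bounds available in the whole graft come from Lemma~\ref{lem:combpath}, whereas the matching upper bounds, and in particular the exact value $-1$ that separates $A$ from $B$, are available only inside the factor-component through Lemma~\ref{lem:incomppath}. The argument therefore hinges on the monotonicity $\distgt{G}{T}{x}{y}\le\lambda(x,y;C,T\cap V(C))$ and on verifying that $(C,T\cap V(C))$ inherits the comb-bipartite structure; everything else is a parity count.
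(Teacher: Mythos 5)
Your argument is correct and follows exactly the route the paper intends: the paper gives no proof, stating only that the lemma "is easily derived from Lemmas~\ref{lem:combpath} and \ref{lem:incomppath}", and your write-up supplies precisely the missing details (the monotonicity $\distgt{G}{T}{x}{y}\le\lambda(x,y;C,T\cap V(C))$ via restriction of a minimum join, the inherited comb-bipartite structure of $(C,T\cap V(C))$, and the parity argument ruling out odd distances between teeth). Note only that the clause "for any $x,y\in B$" must be read, as you do, as $x,y\in V(C)\cap B$, since for teeth in distinct factor-components the stated equivalence can fail.
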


Under Lemma~\ref{lem:combtkl},  
for each $C\in \tcomp{G}$, 
we denote the family of equivalence classes of $\tpart{G}{T}$ that constitute $V(C)\cap B$ 
by $\bpargtpart{G}{T}{C}{B}$.

\section{Extension of Dulmage-Mendelsohn Decomposition} 

From this section onward, we provide and prove new results. 
In this section, we show a canonical partial order over the set of factor-components in comb-bipartite grafts. 
We introduce Lemmas~\ref{lem:circuit} and \ref{lem:order2path} to prove Theorem~\ref{thm:tdm}. 
All results introduced in this section are also used in Sections~\ref{sec:path} and \ref{sec:attribute}. 
The next lemma is classically known and is used everywhere in the remaining part of this paper.

\begin{lemma}[see Seb\"o~\cite{DBLP:journals/jct/Sebo90}] \label{lem:circuit} 
Let $(G, T)$ be a graft, and let $F$ be a minimum join of $(G, T)$. 
If $C$ is a circuit with $w_F(C) = 0$, then $F\Delta E(C)$ is also a minimum join of $(G, T)$.  
Accordingly, every edge of $C$ is allowed. 
\end{lemma}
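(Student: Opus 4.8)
The plan is to first establish that $F\Delta E(C)$ is a join and then to show that it has exactly the same cardinality as $F$, so that minimality is inherited. That $F\Delta E(C)$ is a join follows immediately from the earlier lemma of Seb\"o on symmetric differences with circuits, which holds for an arbitrary circuit $C$ and requires no assumption on $w_F(C)$. Thus the only substantive point is to control $|F\Delta E(C)|$, and this is precisely where the hypothesis $w_F(C)=0$ enters.

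The key computation is to observe that taking the symmetric difference with $E(C)$ deletes from $F$ exactly the edges of $E(C)\cap F$ and inserts exactly the edges of $E(C)\setminus F$. Hence
\[
|F\Delta E(C)| - |F| = |E(C)\setminus F| - |E(C)\cap F|.
\]
On the other hand, by the definition of $w_F$, each edge of $E(C)\setminus F$ contributes $+1$ and each edge of $E(C)\cap F$ contributes $-1$ to $w_F(C)$, so the right-hand side equals $w_F(C)$. Therefore $|F\Delta E(C)| - |F| = w_F(C) = 0$, which gives $|F\Delta E(C)| = |F|$.

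Since $F\Delta E(C)$ is a join of the same cardinality as the minimum join $F$, it is itself a minimum join. For the final assertion, I would split the edges of $C$ into those in $F$ and those in $E(C)\setminus F$: every edge $e\in E(C)\cap F$ lies in the minimum join $F$, while every edge $e\in E(C)\setminus F$ lies in the minimum join $F\Delta E(C)$; in either case $e$ is contained in some minimum join and is therefore allowed. I do not expect a genuine obstacle here, since the whole argument is a matter of parity-and-counting bookkeeping; the only point demanding care is the identity $|F\Delta E(C)| - |F| = w_F(C)$, whose sign conventions must be tracked correctly so that the vanishing weight of the circuit translates into no change in cardinality.
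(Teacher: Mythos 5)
Your proof is correct and complete. The paper itself gives no proof of this lemma, merely citing Seb\"o, but your argument is the standard one: the symmetric-difference-with-a-circuit lemma stated earlier in the paper gives that $F\Delta E(C)$ is a join, and the identity $|F\Delta E(C)|-|F| = |E(C)\setminus F| - |E(C)\cap F| = w_F(C) = 0$ gives minimality, after which every edge of $C$ lies in one of the two minimum joins $F$ or $F\Delta E(C)$ and is therefore allowed.
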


\begin{definition} 
Let $(G, T)$ be a comb-bipartite graft with spine set $A$ and tooth set $B$. 
For any $H_1$ and $H_2$ from $\tcomp{G}{T}$, 
we say $H_1 \tdmo H_2$ if $H_1$ and $H_2$ are identical or 
if $E[H_2\cap A, H_1\cap B]\neq\emptyset$. 
Furthermore, for any  $H_1$ and $H_2$ from $\tcomp{G}{T}$, 
we say $H_1 \tdm H_2$ if there exist $I_1,\ldots, I_k \in \tcomp{G}{T}$, 
where $k \ge 1$, such that $H_1 = I_1$, $H_2 = H_k$, and $I_i \tdmo I_{i+1}$ for each $i\in \{1,\ldots, k\}\setminus \{k\}$. 
\end{definition}

\begin{definition} 
Let $(G, T)$ be a comb-bipartite graft with spine set $A$ and tooth set $B$.  
Let $C_1, C_2\in\tcomp{G}{T}$ be two distinct factor-components with $C_1 \tdm C_2$. 
Let $D_1,\ldots, D_k$, where $k \ge 2$,  be distinct factor-components such that $D_1 = C_1$, $D_k = C_2$, and 
$D_i \tdmo D_{i+1}$ holds for every $i\in \{1,\ldots, k-1\}$. 
We call such  $D_1,\ldots, D_k$ a {\em defining sequence} for $C_1 \tdm C_2$. 
\end{definition}

The next lemma is derived from Lemmas~\ref{lem:incomppath} and \ref{lem:circuit}.

\begin{lemma} \label{lem:order2path} 
Let $(G, T)$ be a comb-bipartite graft with spine set $A$ and tooth set $B$.  
Let $C_1, C_2\in\tcomp{G}{T}$ be two distinct factor-components with $C_1 \tdm C_2$, and 
let $D_1,\ldots, D_k$, where $k \ge 2$, be a defining sequence for $C_1 \tdm C_2$. 
Let $F$ be a minimum join of $(G, T)$. 
Then, 
\begin{rmenum} 
\item 
for every $x\in V(C_1)\cap A$ and every $y\in V(C_2)\cap A$, there is a path of $F$-weight zero between $x$ and $y$ whose vertices are contained in $V(D_1)\cup \cdots \cup V(D_k)$; and, 
\item 
for every $x\in V(C_1)\cap A$ and every $y\in V(C_2)\cap B$, there is a path of $F$-weight $-1$ between $x$ and $y$ whose vertices are contained in $V(D_1)\cup \cdots \cup V(D_k)$. 
\end{rmenum} 
\end{lemma}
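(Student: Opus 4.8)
The plan is to construct the required path explicitly by threading through the defining sequence $D_1, \ldots, D_k$, alternating between subpaths that lie inside a single factor-component and the inter-component edges forced by the relation $\tdmo$. Since each consecutive pair satisfies $D_i \tdmo D_{i+1}$ with $D_i \neq D_{i+1}$, for each $i \in \{1, \ldots, k-1\}$ there is an edge $e_i = a_{i+1}b_i$ with $a_{i+1} \in V(D_{i+1}) \cap A$ and $b_i \in V(D_i) \cap B$. Each such $e_i$ joins two distinct factor-components, hence is non-allowed; since every edge of a minimum join is allowed, $e_i \notin F$ and thus $w_F(e_i) = 1$. Setting $a_1 := x$, I would connect $a_i$ to $b_i$ inside $D_i$ for $1 \le i \le k-1$, cross by $e_i$ into $D_{i+1}$, and finish with a subpath of $D_k$ from $a_k$ to $y$.

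To control the within-component subpaths, I would first verify that each $(D_i, T \cap V(D_i))$ is itself a factor-connected comb-bipartite subgraft with spine $V(D_i) \cap A$ and tooth $V(D_i) \cap B$: factor-connectedness is immediate, and the restriction $F \cap E(D_i)$ is a minimum join of $D_i$ meeting every vertex of $V(D_i) \cap B$ exactly once, so comb-bipartiteness follows from the characterization of comb-bipartite grafts. Lemma~\ref{lem:incomppath} then applies to each $D_i$, yielding a path of $F$-weight $-1$ from any $A$-vertex to any $B$-vertex of $D_i$, and a path of $F$-weight $0$ between any two $A$-vertices; here Lemma~\ref{lem:circuit} underlies the allowedness facts that make these within-component distances attainable (and could alternatively be used to reroute a globally $F$-shortest path into $\bigcup_i V(D_i)$ by flipping zero-weight circuits). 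Concretely, take subpaths $P_i$ of $F$-weight $-1$ from $a_i$ to $b_i$ inside $D_i$ for $1 \le i \le k-1$, and a final subpath $P_k$ inside $D_k$ from $a_k$ to $y$, of $F$-weight $0$ in case (i), where $y \in A$, and of $F$-weight $-1$ in case (ii), where $y \in B$.

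Because the factor-components $D_1, \ldots, D_k$ are pairwise disjoint and each $P_i$ lies entirely in $D_i$, the concatenation $P_1 e_1 P_2 e_2 \cdots e_{k-1} P_k$ visits each component on a disjoint vertex set and is therefore a genuine path, with all vertices in $V(D_1) \cup \cdots \cup V(D_k)$ as required. By additivity of $w_F$, its total weight is $\sum_{i=1}^{k-1}\bigl(w_F(P_i) + w_F(e_i)\bigr) + w_F(P_k) = (k-1)(-1+1) + w_F(P_k)$, which equals $0$ in case (i) and $-1$ in case (ii), completing both statements uniformly. An induction on $k$ would package the same computation if a direct concatenation is felt to be less transparent.

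I expect the main obstacle to be the setup rather than the final bookkeeping: establishing cleanly that each factor-component $D_i$ inherits the comb-bipartite structure, so that Lemma~\ref{lem:incomppath} is legitimately applicable to it, and pinning down that the crossing edges carry $F$-weight exactly $+1$. Once these two facts are secured, the alternating construction and the telescoping weight sum finish the proof with no further difficulty.
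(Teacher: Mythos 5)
Your proposal is correct and takes essentially the same route as the paper: the paper's proof is an induction on $k$ that peels off $D_k$, crosses a non-allowed (hence weight $+1$) edge from $B\cap V(D_{k-1})$ to $A\cap V(D_k)$, and finishes with a weight-$0$ or weight-$-1$ path inside $D_k$ via Lemma~\ref{lem:elem2nonposi}, which is exactly your concatenation argument packaged inductively. Your extra care in checking that each $D_i$ inherits a factor-connected comb-bipartite structure (so that Lemma~\ref{lem:incomppath} applies componentwise) makes explicit a step the paper leaves implicit, but it is not a different method.
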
 
\begin{proof} 
We prove the lemma by the induction on $k$. 
The statement obviously holds for the case where $k = 1$.  
Now, let $k > 1$, and assume that the statement holds for every case where $k$ is less. 
By definition, $D_1 \tdm D_{k-1}$ holds, for which $D_1, \ldots, D_{k-1}$ are a defining sequence.  
Let $e\in E_G[A\cap V(D_k), B\cap V(D_{k-1})]$, and let $u\in A\cap D_k$ and $v\in B\cap D_{k-1}$ be the ends of $e$. 
Let $x\in V(D_1)\cap A$ and $y\in V(D_k)$.   
The induction hypothesis implies that there is a path $P$ between $x$ and $v$ with $w_F(P) = -1$ 
and $V(P)\subseteq V(D_1)\cup\cdots V(D_{k-1})$. 
By contrast, Lemma~\ref{lem:elem2nonposi} implies that $D_k$ has a path $Q$ between $u$ and $y$ 
such that $w_F(Q) = 0$ or $w_F(Q) = -1$ for the cases where $y\in A$ or $y\in B$, respectively. 
Hence, $P + Q$ is a path between $x$ and $y$ that proves the statement for $k$.  
Thus, the lemma is proved. 
\end{proof}

Lemmas~\ref{lem:circuit} and \ref{lem:order2path} derive Theorem~\ref{thm:tdm}.

\begin{theorem}  \label{thm:tdm} 
If $(G, T)$ is a comb-bipartite graft with spine set $A$ and tooth set $B$, 
then $\tdm$ is a partial order over $\tcomp{G}{T}$. 
\end{theorem}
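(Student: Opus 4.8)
The plan is to verify the three defining properties of a partial order for $\tdm$: reflexivity, antisymmetry, and transitivity. Reflexivity and transitivity are the routine parts. Reflexivity is immediate, since $H \tdmo H$ holds by definition for every $H \in \tcomp{G}{T}$, and hence $H \tdm H$ with the trivial defining sequence of length one. Transitivity is also straightforward: if $H_1 \tdm H_2$ and $H_2 \tdm H_3$, then concatenating a defining sequence for $H_1 \tdm H_2$ with one for $H_2 \tdm H_3$ (identifying the shared endpoint $H_2$) produces a chain witnessing $H_1 \tdm H_3$. So the entire substance of the theorem lies in antisymmetry.

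For antisymmetry, I would argue by contradiction. Suppose $C_1, C_2 \in \tcomp{G}{T}$ are distinct with both $C_1 \tdm C_2$ and $C_2 \tdm C_1$. The idea is to use the distance characterization from Lemma~\ref{lem:order2path} to build a closed walk (or circuit) of strictly negative $F$-weight, contradicting the fact that every circuit has nonnegative $F$-weight under a minimum join $F$ (Lemma~\ref{lem:circuit}, or the earlier circuit lemma). Concretely, fix a minimum join $F$. Pick a vertex $x \in V(C_1) \cap A$ and a vertex $y \in V(C_2) \cap A$. By $C_1 \tdm C_2$ and Lemma~\ref{lem:order2path}(i), there is a path $P$ between $x$ and $y$ with $w_F(P) = 0$; symmetrically, by $C_2 \tdm C_1$, there is a path $P'$ between $y$ and $x$ with $w_F(P') = 0$. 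The challenge is that $P + P'$ need not be a simple circuit, so I cannot directly apply the circuit weight bound. I would instead try to extract from the union $P \cup P'$ a genuine circuit, or exploit the asymmetry built into the definition of $\tdmo$ — namely that $\tdmo$ is phrased using edges from the $A$-part of the larger component to the $B$-part of the smaller — to force a weight discrepancy.

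The cleaner route, which I expect to be the intended one, is to sharpen the weight accounting at the level of the defining edges rather than whole paths. Each relation $D_i \tdmo D_{i+1}$ in a defining sequence is witnessed by an edge $e_i \in E_G[A \cap V(D_{i+1}), B \cap V(D_i)]$, and an $F$-balanced traversal of the sequence $C_1 \tdm C_2$ ending at a vertex of $V(C_2) \cap A$ should accumulate $F$-weight $0$, while each individual cross edge and the internal factor-component paths contribute in a controlled way governed by Lemma~\ref{lem:incomppath}. If one then traverses $C_2 \tdm C_1$ and returns, the total closed walk has $F$-weight $0$; but by choosing the arrival and departure vertices to have opposite status (spine versus tooth) at the junctions, the orientation-sensitivity of $\tdmo$ forces one of the cross edges to be counted against the grain, dropping the total below zero. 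This contradicts $w_F \geq 0$ on circuits.

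The hard part will be precisely this last argument: converting the existence of two oppositely directed $\tdm$-chains into an honest negative-weight circuit, while handling the fact that the two chains may share factor-components and that the path union is not simple. I would manage this by a minimal-counterexample choice — taking $C_1, C_2$ with a shortest combined defining sequence — so that the factor-components along the two chains are forced to be distinct except at the shared endpoints, allowing the concatenated paths to be pruned to a circuit. Once the circuit is isolated, Lemma~\ref{lem:combpath}'s lower bounds together with the exact values in Lemma~\ref{lem:incomppath} pin the $F$-weight to a negative value, completing the contradiction and establishing antisymmetry.
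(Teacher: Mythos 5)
Your reflexivity and transitivity arguments match the paper's (it dismisses both as obvious), and your instinct to extract a ``minimal cycle'' of factor-components so that the members of the combined defining sequence are pairwise distinct is exactly the paper's first move. But the step you yourself flag as ``the hard part'' is where the proposal breaks, and it breaks for a structural reason: you are aiming at a contradiction that cannot exist. You propose to assemble a circuit of \emph{strictly negative} $F$-weight, but for a minimum join $F$ every circuit has $w_F \ge 0$ (the first Seb\H{o} lemma), so no correct bookkeeping with Lemmas~\ref{lem:combpath} and \ref{lem:incomppath} will ever push the total below zero --- the arithmetic bottoms out at exactly $0$. Relatedly, your primary construction (two weight-$0$ paths between $x\in A\cap V(C_1)$ and $y\in A\cap V(C_2)$, one from each chain) has the non-simplicity problem you note, and pruning the union $P\cup P'$ to a circuit neither preserves the weight nor guarantees the circuit retains any of the cross edges, so there is no clean way to finish from there.

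The paper's actual contradiction is different in kind: it produces a circuit of weight \emph{exactly zero} that contains a \emph{non-allowed} edge, contradicting Lemma~\ref{lem:circuit}, which asserts that every edge of a zero-weight circuit is allowed. Concretely, after extracting indices $p<q$ so that among $I_p,\ldots,I_q$ only $I_p$ and $I_q$ coincide, one takes the single witnessing edge $xy$ with $x\in B\cap V(I_{q-1})$ and $y\in A\cap V(I_p)$; this edge joins two distinct factor-components and is therefore non-allowed, hence $xy\notin F$ and $w_F(xy)=+1$. Lemma~\ref{lem:order2path}(ii) supplies a path $P$ between $y$ and $x$ of weight $-1$ lying inside $V(I_p)\cup\cdots\cup V(I_{q-1})$, and $P+xy$ is an honest circuit (one path closed by one edge, so no self-intersection issues) of weight $-1+1=0$ through the non-allowed edge $xy$. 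That is the missing idea in your proposal: the orientation-sensitivity of $\tdmo$ is exploited not to force a negative weight, but to guarantee that the closing edge is a non-allowed cross edge sitting on a zero-weight circuit.
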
 
\begin{proof} 
Because reflexivity and transitivity are obvious from the definition,  we prove antisymmetry in the following. 
Suppose, to the contrary, that $H_1$ and $H_2$ are two distinct factor-components with $H_1\tdm H_2$ and $H_2 \tdm H_1$. 
Let $I_1,\ldots, I_k \in \tcomp{G}{T}$, where $k \ge 2$, be a defining sequence for $H_1 \tdm H_2$. 
Let $I_k,\ldots, I_l \in \tcomp{G}{T}$, where $l > k$, be a defining sequence for $H_2 \tdm H_1$. 
Let $F$ be a minimum join of $(G, T)$. 
By this definition, 
 there exist $p$ and $q$ with $1 \le p < q \le l$ and $|p - q| > 1$ 
such that, among $I_p,\ldots, I_q$, only $I_p$ and $I_q$ are pairwise identical. 
Let $x\in V(I_{q-1})\cap B$ and $y\in V(I_p)\cap A$ be the vertices with $xy\in E(G)$. 
By Lemma~\ref{lem:order2path}, 
there is a path $P$ between $x$ and $y$ with  $w(P) = -1$ and $V(P)\subseteq V(I_p)\dot\cup\cdots\dot\cup V(I_{q-1})$. 
Then, $P + xy$ is  a circuit with $w_F(P + wy) = 0$.   
This implies from Lemma~\ref{lem:circuit} that $xy$ is an allowed edge of $(G, T)$, 
which is a contradiction. 
The proof is completed. 
\end{proof}

That is, Theorem~\ref{thm:tdm} states that $(\tcomp{G}{T}, \tdm)$ is a poset. 
We call this poset the {\em Dulmage-Mendelsohn poset} of the comb-bipartite graft $(G, T)$, 
or the {\em Dulmage-Mendelsohn decomposition} when we refer to it as a decomposition of a graft. 
For $C\in \tcomp{G}{T}$,  
we denote the set of strict upper bounds of $C$ by $\up{C}$.

\section{Structure of Paths} \label{sec:path}

In this section, we introduce new properties on the structure of paths and ears in comb-bipartite grafts to be used in Section~\ref{sec:attribute}. 
The goal of this section is to prove Lemma~\ref{lem:ear2sim}, which is used in Section~\ref{sec:attribute}, and 
we provide three lemmas, Lemmas~\ref{lem:noheteroear}, \ref{lem:relativepath}, and \ref{lem:ear2disjoint}, for deriving Lemma~\ref{lem:ear2sim}.

The next lemma is derived from Lemmas~\ref{lem:incomppath} and \ref{lem:circuit}.

\begin{lemma} \label{lem:noheteroear} 
Let $(G, T)$ be a comb-bipartite graft with spine set $A$ and tooth set $B$. 
Let $F$ be a minimum join of $(G, T)$. 
If $P$ is an $F$-balanced ear relative to $C \in \tcomp{G}{T}$, then the ends of $P$ are in $B\cap V(C)$ 
and, accordingly, $w_F(P) = 2$.  
\end{lemma}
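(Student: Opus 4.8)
The plan is to build the whole argument on one observation: an ear $P$ relative to a factor-component $C\in\tcomp{G}{T}$ meets $C$ only at its ends, so each edge of $P$ incident to an end joins a vertex of $V(C)$ to a vertex outside $V(C)$, i.e.\ to a vertex of a different factor-component. Such edges run between distinct factor-components and are therefore non-allowed, and since a minimum join consists of allowed edges only, the two end-edges $e_x$ and $e_y$ of $P$ satisfy $e_x,e_y\notin F$. First I would record this, together with the fact that $(C,T\cap V(C))$ is itself a factor-connected comb-bipartite graft with spine $V(C)\cap A$ and tooth $V(C)\cap B$, so that Lemmas~\ref{lem:balanced} and \ref{lem:incomppath} are available inside $C$.

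The core step is to show that neither end of $P$ lies in the spine $A$. Suppose to the contrary that an end, which I relabel as $x$, lies in $A$, and let $y$ be the other end. Using $e_x,e_y\notin F$, Lemma~\ref{lem:balanced} evaluates the $F$-weight of $P$: it is $0$ if $y\in A$ and $1$ if $y\in B$. On the other side, Lemma~\ref{lem:incomppath} applied to $C$ yields an $F$-balanced path $R$ in $C$ between $y$ and $x$ of $F$-weight $0$ when $y\in A$ and of $F$-weight $-1$ when $y\in B$. Since the internal vertices of $P$ avoid $V(C)$ while $R$ lies in $C$, the paths $P$ and $R$ meet only at $x$ and $y$, so $P+R$ is a circuit, and in both cases $w_F(P+R)=0$. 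Lemma~\ref{lem:circuit} then forces every edge of $P+R$ to be allowed; in particular the non-allowed edge $e_x$ would be allowed, a contradiction. Hence both ends of $P$ lie in $B\cap V(C)$, and the weight assertion follows at once: with $x,y\in B$ and $e_x,e_y\notin F$, Lemma~\ref{lem:balanced}(iii) gives $w_F(P)=2$.

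It remains to rule out the possibility that $P$ is a circuit ear, whose unique vertex in $C$ I again call $x$. Both edges of $P$ at $x$ go to vertices outside $V(C)$, so they are non-allowed and avoid $F$; but $x$ has degree two in $P$, and $F$-balance requires exactly one of its incident edges to lie in $F$, a contradiction. Thus $P$ is a genuine path with distinct ends, and the analysis above applies verbatim. (The degenerate case in which $P$ is a single edge lying inside $C$ is excluded by reading ``ear relative to $C$'' as an ear not contained in $C$.)

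The step I expect to require the most care is guaranteeing that the closing circuit $P+R$ has $F$-weight \emph{exactly} zero rather than merely nonnegative, since it is this exactness that lets Lemma~\ref{lem:circuit} bite. This is precisely where the comb-bipartite structure is used, through the sharp intra-component distances $-1$ and $0$ provided by Lemma~\ref{lem:incomppath}. If one prefers not to invoke that $(C,T\cap V(C))$ is comb-bipartite, the same conclusion can be reached by a parity shortcut: take any $F$-balanced $R$ in $C$ with $w_F(R)\le 0$ (available from Lemma~\ref{lem:elem2nonposi}, once one checks that $F\cap E(C)$ is a minimum join of $C$), and note that $P+R$ is a circuit in the bipartite graph $G$, so $w_F(P+R)$ is even; being nonnegative and at most $w_F(P)$ (which is $0$ or $1$), it must vanish. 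Either route reduces the lemma to the two routine facts recalled in Section~3 — that inter-component edges are non-allowed and that a minimum join restricts to a minimum join on each factor-component.
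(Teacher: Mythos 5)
Your argument is correct and follows essentially the same route as the paper's own proof: rule out an end in $A$ by closing the ear with an intra-component path from Lemma~\ref{lem:incomppath} to get a weight-zero circuit containing the non-allowed end-edges, contradicting Lemma~\ref{lem:circuit}, and then read off $w_F(P)=2$ from Lemma~\ref{lem:balanced}. You merely make explicit a few points the paper leaves implicit (that the end-edges are non-allowed and hence avoid $F$, the circuit-ear case, and the degenerate single-edge case), which is fine.
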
 
\begin{proof} 
Let $x, y$ be the ends of $P$. 
First, suppose  $x\in A\cap V(C)$ and $y \in B \cap V(C)$. 
Then, Lemma~\ref{lem:balanced} implies $w_F(P) = 1$. 
However, Lemma~\ref{lem:incomppath} implies that $C$ has a path $Q$ between $x$ and $y$ with $w_F(Q) = -1$. 
Thus, $P + Q$ is a circuit of weight zero that contains non-allowed edges. This contradicts Lemma~\ref{lem:circuit}.  
The case where the both ends of $P$ are in $A\cap V(C)$  lead to a contradiction by a similar discussion. 
Thus, we obtain $x, y \in B\cap V(C)$.  
Lemma~\ref{lem:balanced} now implies $w_F(P) = 2$. 
\end{proof}

We now introduce a new notation. 
For a graph $H$, we denote by $\nontriconn{H}$ the set of connected components with more than one vertex. 

In the next two lemmas, note the following observation. 
Let $P$ be a path in a graph $G$ with $E(P)\neq \emptyset$, and let $F \subseteq E(G)$. 
Then, $\nontriconn{P. F}$ and $\nontriconn{P -F}$ are sets of edge disjoint paths in which each paths has one edge or more.  
Also, if we trace $P$ from one end, then paths from $\nontriconn{P. F}$ and $\nontriconn{P -F}$ appear alternately on $P$.

Lemma~\ref{lem:noheteroear} implies the next lemma.

\begin{lemma} \label{lem:relativepath} 
Let $(G, T)$ be a comb-bipartite graft with spine set $A$ and tooth set $B$. 
Let $F$ be a minimum join of $(G, T)$. 
Let $C\in \tcomp{G}{T}$. 
Let $x, y\in V(C)$, and let $P$ be a path beween $x$ and $y$ such that $w_F(P) = -2$. 
Then, 
\begin{rmenum} 
\item \label{item:relativepath:ear}  each connected component from $\nontriconn{P - E(C)}$ is an ear relative to $C$ whose $F$-weight is $2$; and, 
\item \label{item:relativepath:inpath} each connected component from $\nontriconn{P. E(C)}$ is  a path whose $F$-weight is $-2$. 
\end{rmenum}   
\end{lemma}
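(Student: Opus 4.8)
The plan is to analyze the structure of the path $P$ by decomposing it according to which edges lie inside the factor-component $C$ and which lie outside. Since $w_F(P) = -2$ and $P$ runs between two vertices $x, y \in V(C)$, I first want to understand how $P$ alternates between being inside $C$ and venturing outside. The connected components of $\nontriconn{P - E(C)}$ are precisely the maximal subpaths of $P$ using edges not in $E(C)$; each such subpath begins and ends at vertices of $C$ (because $x, y \in V(C)$ and the pieces of $P$ inside $C$ separate these excursions), and its internal vertices lie outside $C$. Hence each such component is by definition an ear relative to $C$. I would then invoke Lemma~\ref{lem:noheteroear}: to apply it I must verify that each such ear is $F$-balanced, which should follow from the fact that $P$ itself is effectively $F$-balanced in the relevant sense, or can be arranged to be so. Once the ear is known to be $F$-balanced and relative to $C$, Lemma~\ref{lem:noheteroear} immediately forces its ends to lie in $B\cap V(C)$ and gives $F$-weight $2$, establishing part~\ref{item:relativepath:ear}.

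For part~\ref{item:relativepath:inpath}, I would use a counting argument on the $F$-weights. Let the components of $\nontriconn{P - E(C)}$ be the ears $R_1, \ldots, R_m$ and the components of $\nontriconn{P.\, E(C)}$ be the in-component paths $S_1, \ldots, S_n$. By the alternation observation recorded just before the lemma (tracing $P$ from one end, pieces inside and outside $C$ alternate), these interleave along $P$, so $w_F(P) = \sum_i w_F(R_i) + \sum_j w_F(S_j)$. Having shown each $w_F(R_i) = 2$ from part~\ref{item:relativepath:ear}, and knowing $w_F(P) = -2$, I obtain $\sum_j w_F(S_j) = -2 - 2m$. The key structural point is that each ear $R_i$ has both ends in $B\cap V(C)$, so each $S_j$ is a path of $C$ whose ends lie in $B\cap V(C)$ as well (the teeth where ears attach, together with $x$ and $y$ which I would argue are also in $B$). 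Applying Lemma~\ref{lem:incomppath}(3) or Lemma~\ref{lem:balanced}(iii) to each $S_j$, a path between two vertices of $B$ within the factor-connected component $C$ has $F$-weight $0$ or $-2$; combined with the summation constraint this should pin each $w_F(S_j)$ to exactly $-2$.

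The main obstacle I anticipate is the bookkeeping at the endpoints and establishing that $x$ and $y$ themselves lie in $B\cap V(C)$, together with verifying that each in-component $S_j$ genuinely has its two ends in $B$. The ears attach at vertices of $B\cap V(C)$ by Lemma~\ref{lem:noheteroear}, which controls the interior junctions, but the two extreme pieces of $P$ touch $x$ and $y$, and I must rule out the possibility that some $S_j$ ends at a spine vertex in $A$. I expect this to follow from a global parity and weight analysis: since $w_F(P) = -2$ is even and each ear contributes $+2$, the total weight contributed inside $C$ must be even, which is consistent with all in-path endpoints lying in $B$; a spine endpoint would force an odd contribution somewhere via Lemma~\ref{lem:balanced}(ii) and break the count. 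Carefully justifying the $F$-balancedness of each ear and each in-component so that Lemmas~\ref{lem:balanced}, \ref{lem:incomppath}, and \ref{lem:noheteroear} apply is the delicate part, and I would handle it by first reducing to the case where $P$ is itself $F$-balanced, since a non-$F$-balanced $P$ of weight $-2$ can presumably be shortcut to a balanced one of the same or smaller weight without leaving $V(C)$ on its internal $C$-portions.
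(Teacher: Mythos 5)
There is a genuine gap at the heart of part~\ref{item:relativepath:ear}. You assert that each component $Q$ of $\nontriconn{P-E(C)}$ has all its internal vertices outside $V(C)$ and is therefore ``by definition'' an ear relative to $C$. That is exactly the statement that needs proof: $Q$ is only a maximal subpath of $P$ avoiding the \emph{edges} of $C$, and a priori it could pass through a vertex $v\in V(C)$ using two edges of $\parcut{G}{C}$. The paper's proof is devoted precisely to excluding this: one first notes that $w_F(P)=-2$ together with Lemma~\ref{lem:combpath} forces $P$ to be $F$-balanced with $x,y\in B$; then, if $Q$ had an internal vertex in $V(C)$, the initial segment of $Q$ up to the first such vertex $v$ would be an $F$-balanced ear relative to $C$, so Lemma~\ref{lem:noheteroear} gives $v\in B$; but both $P$-edges at $v$ lie outside $E(C)$, hence are non-allowed and not in $F$, contradicting the $F$-balance of $P$ at the tooth $v$. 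Without some version of this argument your claim that the components are ears, and hence your appeal to Lemma~\ref{lem:noheteroear}, is unsupported. A second problem is your proposed fallback of ``reducing to the case where $P$ is itself $F$-balanced'' by shortcutting $P$: the lemma is a statement about the decomposition of the \emph{given} path $P$, so replacing $P$ by another path proves nothing about the components of the original one. Fortunately no reduction is needed --- $w_F(P)=-2$ already forces $F$-balance, $x,y\in B$, and $e_x,e_y\in F$ (Lemmas~\ref{lem:combpath} and~\ref{lem:balanced}), which also disposes of the endpoint worries you list as obstacles.

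Your treatment of part~\ref{item:relativepath:inpath} by weight counting is a legitimate alternative to the paper's argument (the paper instead checks directly that each in-component is $F$-balanced with both end edges in $F$ and applies Lemma~\ref{lem:balanced}(iii)). But as written it has loose ends: you must know that $P$ begins and ends with pieces inside $C$ (this follows because $e_x,e_y\in F$ are allowed edges, hence in $E(C)$), so that the number of in-components exceeds the number of ears by one; and the bound $w_F(S_j)\ge -2$ for each in-component should be cited from Lemma~\ref{lem:combpath} (a bound on all paths), not from Lemma~\ref{lem:incomppath}, which constrains only the distances $\lambda$ and not the weight of an arbitrary subpath. With those points fixed, and with part~\ref{item:relativepath:ear} repaired as above, the counting argument does close.
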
 
\begin{proof} 
Obviously, $E(P)\neq \emptyset$, and Lemma~\ref{lem:combpath} implies that $P$ is an $F$-balanced path with $x, y \in B$. First, suppose that a path $Q \in \nontriconn{ P - E(C)}$ is not an ear relative to $C$; 
that is, $Q$ has an internal vertex in $V(C)$. 
Let $x$ be an end of $Q$,  trace $Q$ from $x$, and let $v$ be the first encountered vertex in $V(C)$. 
Then, $xQv$ is an $F$-balanced ear relative to $C$. Hence, Lemma~\ref{lem:noheteroear} implies $v\in B$. 
However, because $v$ is an internal vertex of $Q$, we have $\parcut{P}{v} \cap F = \emptyset$.   
This is a contradiction because $v\in B$ is an internal vertex of an $F$-balanced path $P$. 
Hence, every $Q \in \nontriconn{ P - E(C)}$ is an $F$-balanced ear relative to $C$, and Lemma~\ref{lem:noheteroear} accordingly implies $w_F(Q) = 2$. 
Thus, \ref{item:relativepath:ear} is proved. 

It now follows from \ref{item:relativepath:ear} that each path from $\nontriconn{ P. E(C) }$ is an $F$-balanced path both of whose ends are in $B$  
and are connected to edges from $F$. 
Therefore, Lemma~\ref{lem:balanced} now proves \ref{item:relativepath:inpath}. 
This completes the proof.

\end{proof}

Lemmas~\ref{lem:noheteroear} and \ref{lem:relativepath}, together with Lemma~\ref{lem:circuit}, derive the next lemma.

\begin{lemma} \label{lem:ear2disjoint} 
Let $(G, T)$ be a comb-bipartite graft with spine set $A$ and tooth set $B$. 
Let $F$ be a minimum join of $(G, T)$. 
Let $C \in \tpart{G}{T}$, and let $s, t\in V(C)\cap B$ be vertices with $s\not\sim_{(G, T)} t$.  
Let $P$ be a  path between $s$ and $t$ such that $w_F(P) = -2$, and let $Q$ be an $F$-balanced path one of whose ends is $t$. 
If $Q$ does not have any vertices in $V(C)\setminus \{t\}$, then $Q$ is disjoint from $P - V(C)$. 
\end{lemma}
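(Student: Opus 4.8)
My plan is to argue by contradiction, pinning down the \emph{first} place where $Q$ could meet the part of $P$ lying outside $C$ and showing that this vertex cannot exist. Suppose $Q$ meets $P - V(C)$. Tracing $Q$ from $t$, I would let $w$ be the first vertex lying in $V(P)\setminus V(C)$ and write $Q_0 = tQw$ for the traversed initial segment. By the choice of $w$ and the hypothesis that $Q$ avoids $V(C)\setminus\{t\}$, the internal vertices of $Q_0$ lie outside both $V(P)$ and $V(C)$, so $Q_0$ meets $P$ only at $w$ and meets $V(C)$ only at $t$. Since $w_F(P) = -2$ and $s,t\in B$, Lemma~\ref{lem:combpath} shows $P$ is $F$-balanced, and Lemma~\ref{lem:balanced} forces the end edge $e_t$ of $P$ at $t$ to satisfy $e_t\in F$. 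As $e_t\in F$, it is an allowed edge, so both its ends lie in the factor-component $C$; hence the neighbour of $t$ along $P$ is in $V(C)$. Consequently the first edge $f_t$ of $Q_0$ cannot equal $e_t$ (otherwise $Q$ would enter $V(C)\setminus\{t\}$), and because $t\in B$ has a unique edge of $F$ in $\parcut{G}{t}$, I conclude $f_t\notin F$.

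The crux is to show $w\in A$. Since $Q_0$ is $F$-balanced and its first edge $f_t$ lies outside $F$, its edges alternate between $E(G)\setminus F$ and $F$ starting outside $F$; as the colours of the vertices of $Q_0$ alternate starting from $t\in B$, a parity count shows that the last edge of $Q_0$ (the one entering $w$) lies in $F$ exactly when $w\in B$. Suppose, toward a contradiction, that $w\in B$. Because $s,t\in V(C)$ while $w\notin V(C)$, the vertex $w$ is internal to $P$, so it carries two edges of $P$, exactly one of which is in $F$ by $F$-balance. That edge and the last edge of $Q_0$ are two \emph{distinct} edges of $F$ incident with $w$ (distinct because the latter has its other end off $P$), contradicting $|\parcut{G}{w}\cap F| = 1$ for the tooth vertex $w$. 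Hence $w\in A$.

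With $w\in A$ the remaining computation is routine. Applying Lemma~\ref{lem:balanced}(ii) to the $F$-balanced paths $Q_0$ and $wPt$, whose common end $t$ is in $B$ with incident edges $f_t\notin F$ and $e_t\in F$ respectively, I get $w_F(Q_0)=1$ and $w_F(wPt)=-1$. The union $D = Q_0 \cup wPt$ is a circuit, since the two subpaths meet only at $t$ and $w$, and $w_F(D)=0$. Tracing $wPt$ from $w\notin V(C)$ to $t\in V(C)$, it must cross into $V(C)$, and the first such crossing edge joins $C$ to a different factor-component and is therefore non-allowed. This contradicts Lemma~\ref{lem:circuit}, which forces every edge of the weight-zero circuit $D$ to be allowed, and the contradiction establishes the lemma.

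The main obstacle is exactly the middle step of ruling out $w\in B$. The weight-zero circuit of the final paragraph only materialises when $w\in A$; for $w\in B$ the analogous circuit has positive $F$-weight, so no contradiction can be extracted from weights alone, and closing the circuit through $C$ would require an endpoint inequivalent to $t$, which is not available in general. The resolution is the parity-plus-degree argument, which leverages the defining feature of comb-bipartite grafts that every tooth vertex meets a minimum join in exactly one edge; this is precisely what forces the first intersection vertex to be a spine vertex and unlocks the rest of the proof.
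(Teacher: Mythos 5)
There is a genuine gap, and it sits exactly at the step you yourself identify as the crux: the exclusion of $w\in B$. Your parity argument assumes that the edges of the $F$-balanced path $Q_0$ strictly alternate between $F$ and $E(G)\setminus F$, so that the edge entering $w$ is forced into $F$ whenever $w\in B$. But the balance condition only pins down the $F$-pattern at tooth vertices ($B$-vertices); at a spine vertex the two incident edges of the path may both avoid $F$ (or both lie in $F$). Indeed, your own treatment of $P$ shows this reading is untenable: a path between two $B$-vertices of a bipartite graph whose edges strictly alternate has $F$-weight $0$, so under strict alternation the hypothesis $w_F(P)=-2$ could never be met, and Lemma~\ref{lem:balanced}(iii) with $e_x,e_y\in F$ would be vacuous. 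Once alternation is dropped, the edge of $Q_0$ entering $w$ need not belong to $F$, no conflict with $|\parcut{G}{w}\cap F|=1$ arises, and the case $w\in B$ survives. In fact the paper proves the opposite of your conclusion: the first intersection vertex lies in $B$, with $w_F(tQx)=2$ and $w_F(xPt)=0$, giving a circuit of weight $2$ from which no contradiction can be extracted locally. Your final paragraph, which correctly kills the case $w\in A$ via a zero-weight circuit through a non-allowed edge of $\parcut{G}{V(C)}$, reproduces only the easy half of the paper's Claim~\ref{claim:ear2disjoint:x2B}.

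The remaining case $w\in B$ is where all the substance of the lemma lies, and it cannot be closed by a degree count at $w$. The paper handles it by locating $w$ on an ear of $P$ relative to $C$ (Lemma~\ref{lem:relativepath} gives the weights of the pieces of $P$ inside and outside $C$), invoking Lemma~\ref{lem:combtkl} to find, among the three tooth vertices $y,z,u$ where the relevant pieces of $P$ meet $C$, a pair that is inequivalent under $\sim_{(C,T\cap V(C))}$, hence joined inside $C$ by a path $R$ of weight $-2$, and then assembling the four-piece circuit $tQx+xPy+yRv+vPt$ of weight $2+0+0-2=0$ containing non-allowed edges, contradicting Lemma~\ref{lem:circuit}. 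None of this global structure (in particular the hypothesis $s\not\sim_{(G,T)}t$ and the Kotzig--Lov\'asz partition of $V(C)\cap B$) enters your argument, which is a strong sign that the case analysis has collapsed too early: your proof never uses that $s$ and $t$ are inequivalent, yet the lemma is false without some such hypothesis tying $Q$ to the structure of $P$ inside $C$.
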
 
\begin{proof} 
Suppose that $Q$ shares a vertex with $P - V(C)$.  
Trace $Q$ from $t$, and let $x$ be the first encountered vertex in $P - V(C)$. 
In the following, note that, 
 for each $L \in \nontriconn{P - E(C)}$, the ends of $L$ are connected to non-allowed edges in $\parcut{G}{C}$; 
 by contrast,  
 for each $L \in \nontriconn{P. E(C) }$,  the ends of $L$ are connected to edges from $F$.

\begin{pclaim} \label{claim:ear2disjoint:x2B} 
It holds that $w_F(tQx) = 2$ and $w_F(xPt) = 0$. 
\end{pclaim} 
\begin{proof} 
We first prove that $x$ is in $B$. 
Suppose $x\in A$. 
Then, Lemma~\ref{lem:balanced} implies $w_F(tQx) = 1$ and $w_F(xPt) =  -1$.  
It follows that $tQx + xPt$ is a circuit of weight zero that contains non-allowed edges, which contradicts Lemma~\ref{lem:circuit}. 
Thus, we obtain $x\in B$.  

Therefore, Lemma~\ref{lem:balanced} further implies $w_F(tQx) \in \{0, 2\}$ and $w_F(xPt) \in \{-2, 0\}$.  
From Lemma~\ref{lem:circuit} again,  we obtain $w_F(tQx) = 2$ and $w_F(xPt) = 0$.  
The claim is proved.

\end{proof} 

Trace $sPx$ from $x$, and let $y$ be the first encountered vertex in $C$. 
Additionally, trace $xPt$ from $x$, and let $z$ be the first encountered vertex in $C$; 
note  $yPz \in \nontriconn{P- E(C)}$.  
Lemma~\ref{lem:relativepath} implies  $w_F(yPz) = 2$. 
Note also $w_F(zPt) = -2$.

\begin{pclaim} \label{claim:ear2disjoint:nonneg} 
 It holds that $w_F(xPy) = 0$ and $w_F(xPz) = 2$. 
\end{pclaim} 
\begin{proof} 
 Claim~\ref{claim:ear2disjoint:x2B} implies that 
$w_F(xPz) =  w_F(xPt) - w_F(zPt) = 0 - (-2) = 2$. This further implies $w_F(xPy) = w_F(yPz) - w_F(xPz) = 2 - 2 = 0$. 
The claim is proved. 
\end{proof} 

Let $P'$ be the path from $\nontriconn{P. E(C)}$ that contains $z$, and let $u$ be the end of $P'$ other than $z$. 
Note that Lemma~\ref{lem:relativepath} implies $y, z, u\in B$.  
Lemmas~\ref{lem:combtkl} and \ref{lem:relativepath} \ref{item:relativepath:inpath} imply 
that $z$ and $u$ are contained in the same member of $\tpart{C}{T\cap V(C)}$. 
Hence, either $y\not\sim_{(C, T\cap V(C))} z$ or $y \not\sim_{(C, T\cap V(C))} u$ holds.  
Therefore, there is a path $R$ in $C$ between $y$ and $z$ or $u$ such that $w_F(R) = -2$. 
Trace $R$ from $y$, and let $v$ be the first encountered vertex in $yPt$. 
Note that $\parcut{R}{y} \subseteq F$ holds, according to Lemma~\ref{lem:balanced}.

\begin{pclaim} \label{claim:ear2disjoint:inseg} 
It holds that $w_F(yRv) = 0$ and $w_F(vPt) = -2$. 
\end{pclaim}  
\begin{proof} 
First, we prove $v\in B$. 
If $v\in A$ holds, then Lemma~\ref{lem:balanced} implies $w_F(yRv) = -1$ and $w_F(vPy) = 1$; 
accordingly, $yRv + vPy$ is a circuit of weight zero that contains non-allowed edges, which contradicts Lemma~\ref{lem:circuit}. 
Hence,  $v\in B$ holds.    

This further implies from Lemma~\ref{lem:balanced} that $w_F(yRv) \in \{-2, 0\}$ and $w_F(vPy) \in \{0, 2\}$. 
From Lemma~\ref{lem:circuit} again, we have $w_F(yRv) = 0$ and $w_F(vPy) = 2$. 
This further implies $w_F(vPt) = w_F(zPt) - w_F(zPv) = -2 - 0 = -2$. 
The claim is proved. 

\end{proof} 

It now follows from Claims~\ref{claim:ear2disjoint:x2B}, \ref{claim:ear2disjoint:nonneg}, and \ref{claim:ear2disjoint:inseg}  that 
$tQx + xPy + yRv + vPt$ is a circuit whose weight is equal to $w_F(tQx) + w_F(xPy) + w_F(yRv) + w_F(vPt) = 2 + 0 + 0  -2 = 0$
 that contains non-allowed edges; this again contradicts Lemma~\ref{lem:circuit}. 
Thus, the lemma is proved. 
\end{proof}

Lemmas~\ref{lem:circuit} and \ref{lem:ear2disjoint} imply the next lemma.

\begin{lemma}  \label{lem:ear2sim} 
Let $(G, T)$ be a comb-bipartite graft with spine set $A$ and tooth set $B$. 
Let $F$ be a minimum join of $(G, T)$. 
Let $C \in \tpart{G}{T}$, and let $P$ be an $F$-balanced ear relative to $C$, whose ends are $s, t\in V(C)$. 
Then, $s, t \in V(C)\cap B$ and $s \sim_{(G, T)} t$ hold. 
\end{lemma}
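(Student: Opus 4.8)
The plan is to read $C$ as a factor-component and to obtain the first assertion directly from Lemma~\ref{lem:noheteroear}: since $P$ is an $F$-balanced ear relative to $C$, its ends satisfy $s,t\in V(C)\cap B$ and, moreover, $w_F(P)=2$. It then remains to prove $s\sim_{(G,T)} t$, which I would do by contradiction. Suppose $s\not\sim_{(G,T)} t$; in particular $s\neq t$. As $s$ and $t$ lie in the common factor-component $C$ and both belong to $B$, Lemma~\ref{lem:combtkl} yields $\distgt{G}{T}{s}{t}=-2$, so there is a path $R$ between $s$ and $t$ with $w_F(R)=-2$.

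The core of the argument is to show that $P$ and $R$ share no vertices besides $s$ and $t$, so that $P+R$ is a circuit of $F$-weight $w_F(P)+w_F(R)=2+(-2)=0$. This is exactly where Lemma~\ref{lem:ear2disjoint} enters, and also where the main obstacle lies: the whole ear $P$ cannot serve as the path ``$Q$'' of that lemma, because both of its ends $s,t$ lie in $V(C)$, violating the hypothesis that $Q$ avoid $V(C)\setminus\{t\}$. To circumvent this I would truncate the ear. Since $s,t\in B$ are non-adjacent in the bipartite graph $G$, the ear has length at least two; letting $s'$ be the neighbour of $s$ along $P$, the vertex $s'$ is an internal vertex of the ear and hence lies outside $C$. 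The subpath $Q:=tPs'$ is then an $F$-balanced path with one end $t$ whose only vertex in $V(C)$ is $t$, so Lemma~\ref{lem:ear2disjoint} applies and gives that $Q$ is disjoint from $R-V(C)$. Because $Q$ contains every internal vertex of the ear, and these vertices all lie outside $C$, they meet neither $R-V(C)$ (by the lemma) nor the portion of $R$ inside $C$ (being outside $C$); thus they avoid $V(R)$ entirely. Since $V(P)\cap V(C)=\{s,t\}$ and $\{s,t\}\subseteq V(R)$, this gives $V(P)\cap V(R)=\{s,t\}$.

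With this internal disjointness, $P+R$ is a circuit of $F$-weight $0$, so Lemma~\ref{lem:circuit} renders every edge of it allowed. But the edge $ss'$ of the ear joins $s\in V(C)$ to $s'\notin V(C)$, i.e.\ it runs between distinct factor-components and is therefore non-allowed --- a contradiction. Hence $s\sim_{(G,T)} t$, as required. I expect all the genuine difficulty to reside in the truncation step and in verifying that $tPs'$ meets the hypotheses of Lemma~\ref{lem:ear2disjoint}; once the disjointness $V(P)\cap V(R)=\{s,t\}$ is in place, the weight-$0$ circuit together with Lemma~\ref{lem:circuit} closes the argument routinely.
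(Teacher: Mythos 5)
Your proof is correct and follows essentially the same route as the paper: Lemma~\ref{lem:noheteroear} for the first assertion, then a weight-$(-2)$ path between $s$ and $t$, disjointness via Lemma~\ref{lem:ear2disjoint}, and a weight-zero circuit contradicting Lemma~\ref{lem:circuit}. Your truncation to $tPs'$ is in fact a welcome refinement: the paper invokes Lemma~\ref{lem:ear2disjoint} directly on the ear even though both of its ends lie in $V(C)$, so your version makes precise a step the paper leaves implicit.
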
 
\begin{proof} 
Lemma~\ref{lem:noheteroear} implies $s, t\in V(C)\cap B$ and $w_F(P) = 2$. 
Suppose $s \not \sim_{(G, T)} t$.    
Then, there exists  a path  $Q$ between $s$ and $t$ with $w_F(Q) = -2$. 
Lemma~\ref{lem:ear2disjoint} implies that $P - s - t$ is disjoint from $Q$. 
Hence, $P + Q$ is a circuit of weight zero that contains non-allowed edges, which contradicts Lemma~\ref{lem:circuit}. 
\end{proof}

\section{Attributes of Upper Bounds} \label{sec:attribute}

We now show a structural property regarding the partial order $\tdm$ using the general Kotzig-Lov\'asz decomposition. 
We prove that,  for each factor-component $C$ of a comb-bipartite graft $(G, T)$,   
each upper bound regarding $\tdm$ has a ``label'' chosen from the members of $\pargtpart{G}{T}{C}$ that are contained in the tooth set. 
In the following, we provide and prove Lemmas~\ref{lem:attribute} and \ref{lem:nodiamond}, 
and these two lemmas immediately derive Theorem~\ref{thm:attribute}, which states the existence of those ``labels''.

The next lemma is derived from Lemmas~\ref{lem:order2path} and \ref{lem:ear2sim}. 

\begin{lemma} \label{lem:attribute} 
Let $(G, T)$ be a comb-bipartite graft with spine set $A$ and tooth set $B$.  
Let $C_1, C_2\in\tcomp{G}{T}$ be two distinct factor-components with $C_1 \tdm C_2$. 
Let $D_1,\ldots, D_k$, where $k \ge 2$,  be a defining sequence for $C_1 \tdm C_2$. 
Then,  there  exists $S\in \tpart{G}{T}$ with $S \subseteq V(C_1) \cap B$  such that $\parNei{G}{D_i} \cap V(C_1) \subseteq S$ 
holds for every $i\in \{2,\ldots, k\}$.   
\end{lemma}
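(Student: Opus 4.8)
The plan is to prove this by induction on the length $k$ of the defining sequence, which mirrors the proof of Lemma~\ref{lem:order2path}. The key structural fact I want to exploit is Lemma~\ref{lem:ear2sim}: any $F$-balanced ear relative to a factor-component $C_1$ has both ends in $V(C_1)\cap B$ and, crucially, those ends lie in the \emph{same} member of $\tpart{G}{T}$. So the strategy is to show that every neighbor in $V(C_1)$ of the later components $D_2,\dots,D_k$ can be reached from $C_1$ by a path that, after leaving $C_1$, returns to $C_1$, producing an $F$-balanced ear whose ends are therefore forced into one common equivalence class $S\subseteq V(C_1)\cap B$.

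First I would fix a minimum join $F$ and set $S$ to be the equivalence class of $\tpart{G}{T}$ containing the vertex $v\in V(C_1)\cap B$ guaranteed by the edge $e\in E_G[A\cap V(D_2), B\cap V(D_1)]$ witnessing $D_1\tdmo D_2$ (recall $D_1 = C_1$). By Lemma~\ref{lem:combtkl}, such $v$ lies in some member of $\tpart{G}{T}$ contained in $V(C_1)\cap B$, so $S\subseteq V(C_1)\cap B$ is well defined. The goal is then to show, for each $i\in\{2,\dots,k\}$, that every vertex $w\in\parNei{G}{D_i}\cap V(C_1)$ belongs to $S$. Take such a $w$, realized by an edge $wu$ with $u\in V(D_i)$. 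The induction hypothesis along the subsequence $D_1,\dots,D_{i-1}$ (a defining sequence for $C_1\tdm D_{i-1}$) should supply the needed connectivity inside $V(D_1)\cup\cdots\cup V(D_{i-1})$, and Lemma~\ref{lem:order2path} provides, for the end of the witnessing edge between $D_{i-1}$ and $D_i$, a low-weight path back toward $C_1$.

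The central construction is to assemble an $F$-balanced ear relative to $C_1$ one of whose ends is $w$. Concretely, using Lemma~\ref{lem:order2path} I would build a path of the correct $F$-weight from $w$ (or from the appropriate $B$-vertex of $C_1$ associated to $w$) through $V(D_2)\cup\cdots\cup V(D_i)$ and back to a vertex of $C_1$, arrange it to be $F$-balanced, and restrict to the final ear segment relative to $C_1$. Lemma~\ref{lem:noheteroear} forces both ends of that ear into $V(C_1)\cap B$, and Lemma~\ref{lem:ear2sim} forces them into a single class of $\tpart{G}{T}$; chaining this with the base vertex $v$ (and using transitivity of $\sim_{(G,T)}$, i.e.\ that $\tpart{G}{T}$ is a genuine partition by Theorem~\ref{thm:tkl}) places $w$ in $S$.

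The main obstacle I anticipate is the bookkeeping needed to guarantee that the constructed closed walk from $C_1$ back to $C_1$ genuinely decomposes into $F$-balanced ears relative to $C_1$ rather than wandering back into $C_1$ in an uncontrolled way; this is exactly the kind of difficulty handled by Lemma~\ref{lem:relativepath}, so I expect to invoke it to peel off ear segments and control their $F$-weights. A secondary subtlety is ensuring the weights from Lemma~\ref{lem:order2path} (namely $0$ for $A$-to-$A$ and $-1$ for $A$-to-$B$) combine with the edge $wu$ to yield an ear of the balanced type covered by Lemma~\ref{lem:ear2sim}; if the naive concatenation is not already $F$-balanced, I would replace offending sub-circuits of $F$-weight zero using Lemma~\ref{lem:circuit} to rebalance without changing allowedness. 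Once every $w$ is shown to lie in the common class $S$, the conclusion $\parNei{G}{D_i}\cap V(C_1)\subseteq S$ for all $i\in\{2,\dots,k\}$ follows directly.
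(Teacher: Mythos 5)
Your proposal is correct and follows essentially the same route as the paper: use Lemma~\ref{lem:order2path} to obtain a low-weight path through the later components of the defining sequence, close it up with the two edges into $C_1$ to form an $F$-balanced ear relative to $C_1$, and apply Lemma~\ref{lem:ear2sim} to force the two endpoints into a common class of $\tpart{G}{T}$. The paper does this directly for an arbitrary pair of neighbor vertices rather than by induction, and your worry about the connecting path re-entering $C_1$ is moot, since Lemma~\ref{lem:order2path} already confines it to $V(D_i)\cup\cdots\cup V(D_j)$, which is disjoint from $V(C_1)$.
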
 
\begin{proof} 
Assume that there exist $i, j \in \{2,\ldots, k\}$ with $i < j$ such that 
$\parNei{G}{D_\alpha} \cap V(C_1) \neq \emptyset$ for each $\alpha \in \{ i, j\}$.  
For each $\alpha\in\{i, j\}$, let $u_\alpha \in A\cap V(D_\alpha)$ and $v_\alpha \in B\cap V(C_1)$ be vertices with $u_\alpha v_\alpha \in E(G)$.

Let $F$ be a minimum join of $(G, T)$.  

Because $D_i \tdm D_j$ holds, 
Lemma~\ref{lem:order2path} implies that there is a path $P$ between $u_i$ and $u_j$ such that $w_F(P) = 0$ and $V(P)\subseteq V(D_i)\cup \cdots \cup V(D_j)$.    
Thus, $P + u_1v_1 + u_2v_2$ is an $F$-balanced ear relative to $C_1$. 
Therefore, Lemma~\ref{lem:ear2sim} implies $v_i \sim v_j$. 
This completes the proof. 
\end{proof}

Under Lemma~\ref{lem:attribute}, we provide another property of the partial order $\tdm$ using Lemmas~\ref{lem:incomppath}, \ref{lem:order2path}, and \ref{lem:ear2sim}.

\begin{lemma} \label{lem:nodiamond} 
Let $(G, T)$ be a comb-bipartite graft with spine set $A$ and tooth set $B$.  
Let $C_0\in \tcomp{G}{T}$, and let $C_1, C_2 \in \tcomp{G}{T} \setminus \{C_0\}$ be factor-components 
such that $C_0 \tdmo C_1$ and $C_0 \tdmo C_2$. 
Let $S_1, S_2 \in \tpart{G}{T}$ be  equivalence classes such that $\parNei{G}{C_i} \cap V(C_0) \subseteq S_i$ for each $i\in\{1, 2\}$.   
If $S_1$ and $S_2$ are distinct, then no factor-component $C$ satisfies $C_1 \tdm C$ and $C_2 \tdm C$ at the same time.  
\end{lemma}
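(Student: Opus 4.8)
The plan is to argue by contradiction in the same spirit as Lemma~\ref{lem:attribute}: assuming a common upper bound exists, I would build an $F$-balanced ear relative to $C_0$ joining the two attachment vertices of $C_1$ and $C_2$ in $C_0$, and then invoke Lemma~\ref{lem:ear2sim} to force those two vertices into a common equivalence class, contradicting $S_1 \neq S_2$. Concretely, suppose to the contrary that some $C \in \tcomp{G}{T}$ satisfies $C_1 \tdm C$ and $C_2 \tdm C$. Since $C_0 \tdmo C_i$ and $C_0 \neq C_i$, for each $i \in \{1,2\}$ I choose an edge $u_i v_i \in E_G[V(C_i)\cap A,\, V(C_0)\cap B]$ with $u_i \in V(C_i)\cap A$ and $v_i \in V(C_0)\cap B$; then $v_i \in \parNei{G}{C_i}\cap V(C_0) \subseteq S_i$. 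As $S_1$ and $S_2$ are distinct, hence disjoint, members of $\tpart{G}{T}$, these nonempty neighbor sets cannot coincide, so $C_1 \neq C_2$, and moreover $v_1 \not\sim_{(G,T)} v_2$. Note also that each $u_i v_i$ is non-allowed, being an edge between distinct factor-components.

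Fix a minimum join $F$. The next step is to produce a weight-zero $F$-balanced path from $u_1$ to $u_2$ that avoids $C_0$, using $C$ as a bridge. Since $C_1 \neq C_2$, the component $C$ strictly dominates at least one of $C_1, C_2$, so $V(C)\cap A \neq \emptyset$. Applying Lemma~\ref{lem:order2path}(i) along a defining sequence for $C_i \tdm C$ gives, for each $i$, a path of $F$-weight $0$ from $u_i$ to some vertex of $V(C)\cap A$, and Lemma~\ref{lem:incomppath}(2) supplies a path of $F$-weight $0$ between any two vertices of $V(C)\cap A$ inside the factor-connected $C$. Concatenating these yields a walk of $F$-weight $0$ from $u_1$ to $u_2$ whose vertices lie in the factor-components occurring in the two defining sequences together with $C$. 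By the antisymmetry of $\tdm$ (Theorem~\ref{thm:tdm}), every such factor-component lies strictly above $C_1$ or $C_2$, and in particular differs from $C_0$ (which lies strictly below both $C_1$ and $C_2$); hence all vertices of this walk avoid $V(C_0)$.

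Granting a genuine $F$-balanced path $P$ from $u_1$ to $u_2$ of $F$-weight $0$ with $V(P)\cap V(C_0) = \emptyset$, the graph $P + u_1 v_1 + u_2 v_2$ is an $F$-balanced ear relative to $C_0$ whose ends are $v_1, v_2 \in V(C_0)\cap B$ (its $F$-weight is $1 + 0 + 1 = 2$, in agreement with Lemma~\ref{lem:combpath}). Lemma~\ref{lem:ear2sim} then forces $v_1 \sim_{(G,T)} v_2$, contradicting $v_1 \in S_1$, $v_2 \in S_2$ with $S_1 \neq S_2$. This would complete the proof.

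The hard part will be upgrading the weight-zero \emph{walk} of the second paragraph to a genuine simple \emph{path} $P$ that is simultaneously $F$-balanced and disjoint from $C_0$, as claimed in the third paragraph. The two ascending paths (from $C_1$ and from $C_2$) and the connector inside $C$ may share vertices, so the concatenation is only a closed-up walk, and naive shortcutting deletes vertices without obviously preserving the $F$-weight or $F$-balancedness. I would resolve this by taking $P$ to be a minimum-$F$-weight path from $u_1$ to $u_2$ among all paths avoiding $V(C_0)$: such a path exists because the walk above avoids $V(C_0)$; its weight is at most $0$, since deleting the circuits encountered when shortcutting the walk cannot increase it (every circuit has nonnegative $F$-weight for a minimum join); and its weight is at least $0$ by Lemma~\ref{lem:combpath}(ii) applied to the two spine-ends $u_1, u_2$. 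Hence $w_F(P) = 0$. A circuit-tracing argument in the style of Lemma~\ref{lem:ear2disjoint}, together with Lemma~\ref{lem:circuit}, should then certify that this $F$-shortest path is $F$-balanced and that appending the non-allowed edges $u_1 v_1$ and $u_2 v_2$ produces an honest ear relative to $C_0$, at which point Lemma~\ref{lem:ear2sim} applies exactly as above.
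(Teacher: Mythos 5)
Your overall strategy is exactly the paper's: argue by contradiction, route a weight-zero path from $u_1$ to $u_2$ through the common upper bound via Lemma~\ref{lem:order2path} and Lemma~\ref{lem:incomppath}, append the two non-allowed edges $u_iv_i$ to obtain an $F$-balanced ear relative to $C_0$, and invoke Lemma~\ref{lem:ear2sim} to force $v_1\sim_{(G,T)}v_2$, contradicting $S_1\neq S_2$. You also correctly identify the one delicate point, namely that the naive concatenation is only a walk. Where you and the paper part ways is in how that point is handled, and your handling has a genuine gap.

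The paper's resolution is combinatorial: it first reduces to the case where $C_1, C_2, C_3$ are mutually distinct and the two defining sequences $H^1_1,\ldots,H^1_{k_1}$ and $H^2_1,\ldots,H^2_{k_2}$ share no member except the last (if they shared an earlier member, that member would itself be a common upper bound and one could restart with it). Since distinct factor-components are vertex-disjoint and Lemma~\ref{lem:order2path} confines $P_i$ (minus its endpoint $u_i$) to the components of its own sequence, the concatenation $P_1+Q+P_2$ is then a simple path by construction, inheriting $F$-balancedness from the pieces. Your alternative --- take an $F$-shortest $u_1$--$u_2$ path among paths avoiding $V(C_0)$ --- correctly pins the weight to $0$, but the claim that such a path is $F$-balanced does not follow. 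For a path between two vertices of $A$ in a comb-bipartite graft, weight $0$ forces exactly one incident $F$-edge at each internal vertex of $B$ (because $|F\cap\parcut{G}{b}|=1$ for $b\in B$ bounds the count), but it imposes no constraint at internal vertices of $A$, which may meet zero or two $F$-edges of the path. Worse, even granting $F$-balancedness of $P$ itself, the ear $v_1u_1 + P + u_2v_2$ is $F$-balanced at $u_i$ only if the edge of $P$ at $u_i$ lies in $F$ (since $u_iv_i\notin F$), and nothing in the minimality of $w_F(P)$ secures this. You defer all of this to ``a circuit-tracing argument in the style of Lemma~\ref{lem:ear2disjoint} should certify,'' which is the crux, not a corollary; without it Lemma~\ref{lem:ear2sim} cannot be applied. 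The disjoint-defining-sequence reduction is the missing idea that makes the construction go through cleanly.
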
 
\begin{proof} 
Suppose, to the contrary, that $C_3\in\tcomp{G}{T}$ satisfies both $C_1 \tdm C_3$ and $C_2 \tdm C_3$.    
For each $i\in \{1, 2\}$, let $s_i \in S_i$ and $t_i\in A\cap V(C_i)$ be vertices with $s_it_i \in E(G)$. 
Under Lemma~\ref{lem:attribute}, we can assume that $C_1, C_2, C_3$ are mutually distinct. 
For each $i\in \{1, 2\}$, 
let $H^i_1,\ldots, H^i_{k_i} \in \tcomp{G}{T}$, where $k_i \ge 2$, be a defining sequence for $C_i \tdm C_3$. 
We can assume 
that $H^1_i$ and $H^2_j$ are distinct for any $i\in \{1,\ldots, k-1\}$ and any $j\in \{1, \ldots, l-1\}$.

Let $F$ be a minimum join of $(G, T)$. 
For each $i\in \{1, 2\}$,   
Lemma~\ref{lem:order2path} implies that  
there exists a path $P_i$ of weight zero between $t_i$ and a vertex $u_i \in A\cap V(C_3)$ 
whose vertices except $u_i$ are $V(H^i_1)\cup \cdots \cup V(H^i_{k_i -1})$. 
Additionally, Lemma~\ref{lem:incomppath} implies that 
$C_3$ has a path $Q$ of weight zero between $s_1$ and $s_2$. 
Then, $P_1  + Q + P_2 + s_1t_1 + s_2t_2$  is an $F$-balanced ear relative to $C_0$. 
Hence, Lemma~\ref{lem:ear2sim} proves that $S_1 = S_2$. 

The lemma is proved. 
\end{proof}

Lemmas~\ref{lem:attribute} and \ref{lem:nodiamond} immediately imply the following theorem. 

\begin{theorem} \label{thm:attribute} 
Let $(G, T)$ be a comb-bipartite graft with spine set $A$ and tooth set $B$.  
For each $C_0 \in \tcomp{G}{T}$, 
there uniquely exists a partition $\{ \up{S} \subseteq \up{C_0}: S\in \bpargtpart{G}{T}{C}{B} \}$ of $\up{C_0}$, in which some members can be empty, that satisfies the following two properties. 
\begin{rmenum}  
\item 
If $D\in\up{C_0}$ is a factor-component with $\parNei{G}{D}\cap V(C_0)\neq \emptyset$ and 
$S \in \bpargtpart{G}{T}{C}{B}$ is the equivalence class with $\parNei{G}{D}\cap V(C_0) \subseteq S$, 
then $D\in\up{S}$ holds. 
\item 
If a factor-component $D \in \up{C_0}$ satisfies $E_G[D, D'] \neq \emptyset$ for a factor-component $D'\in \up{S}$, 
then  $D\in \up{S}$ holds. 

\end{rmenum} 
\end{theorem}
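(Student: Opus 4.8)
The plan is to attach to every strict upper bound of $C_0$ a \emph{label} drawn from $\bpargtpart{G}{T}{C_0}{B}$ and then to show that the fibers of this labeling form the unique partition required. To define the label, I fix $D\in\up{C_0}$ and a defining sequence $C_0 = D_1, D_2, \ldots, D_k = D$ for $C_0 \tdm D$. Since $D_1 \tdmo D_2$ with $D_1 \neq D_2$ and $G$ is bipartite, $E_G[V(D_2)\cap A, V(C_0)\cap B] \neq \emptyset$, so $\parNei{G}{D_2} \cap V(C_0)$ is a nonempty subset of $V(C_0)\cap B$; by Lemma~\ref{lem:attribute} it lies inside a single member $S \in \bpargtpart{G}{T}{C_0}{B}$, and since these members partition $V(C_0)\cap B$ by Lemma~\ref{lem:combtkl}, this $S$ is unique. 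I declare $S$ to be the label of $D$.

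The crux, and the only real obstacle, is that the label must not depend on the chosen defining sequence, and this is exactly what Lemma~\ref{lem:nodiamond} supplies. Given two defining sequences for $C_0\tdm D$ whose first steps are $C_0\tdmo D_2$ and $C_0\tdmo D_2'$, both yield $D_2 \tdm D$ and $D_2' \tdm D$, so $D$ is a common strict upper bound of $D_2$ and $D_2'$; the contrapositive of Lemma~\ref{lem:nodiamond} then forces the labels of $D_2$ and $D_2'$ to coincide. Hence the label is well defined, and I set $\up{S}$ to be the set of those $D\in\up{C_0}$ whose label is $S$, which is visibly a partition of $\up{C_0}$ with possibly empty blocks.

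It then remains to check (i), (ii), and uniqueness, all of which are short once the labeling is in place. For (i), if $\parNei{G}{D}\cap V(C_0)\neq\emptyset$ then antisymmetry of $\tdm$ (Theorem~\ref{thm:tdm}) together with bipartiteness shows $\parNei{G}{D}\cap V(C_0)\subseteq V(C_0)\cap B$ and $C_0\tdmo D$, so $D$ is an immediate successor; the one-step defining sequence then gives label $S$, that is, $D\in\up{S}$. For (ii) I first record the monotonicity fact that $D_1\tdm D_2$ with $D_1,D_2\in\up{C_0}$ implies equal labels: concatenating a $C_0$-to-$D_1$ defining sequence with a $D_1$-to-$D_2$ one and pruning repeated components yields a defining sequence for $C_0\tdm D_2$ with the same first step, so well-definedness gives the claim. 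An edge $E_G[D, D']\neq\emptyset$ forces, by bipartiteness, either $D\tdmo D'$ or $D'\tdmo D$, hence $D$ and $D'$ are comparable inside $\up{C_0}$, and monotonicity shows $D$ inherits the label $S$ of $D'$, that is, $D\in\up{S}$.

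Finally, for uniqueness I take any partition satisfying (i) and (ii) and propagate along a defining sequence $C_0, D_2, \ldots, D_k = D$: property (i) places $D_2$ in the block indexed by its label, and property (ii) carries membership from $D_i$ to $D_{i+1}$ along the edge witnessing $D_i\tdmo D_{i+1}$, so $D$ lands in the block indexed by its own label, forcing that partition to agree with mine on every block. The principal difficulty is concentrated entirely in the well-definedness step of the second paragraph, which Lemma~\ref{lem:nodiamond} is tailored to resolve; the remaining work is routine propagation of (i) through (ii), the only delicate bookkeeping being the pruning of concatenated defining sequences, which never removes the first step out of $C_0$ and hence does not disturb the label.
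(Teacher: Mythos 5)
Your proof is correct and takes essentially the same route as the paper: the paper gives no explicit argument, asserting only that Lemmas~\ref{lem:attribute} and \ref{lem:nodiamond} immediately imply the theorem, and your write-up is a careful, accurate working-out of precisely that derivation (labels via Lemma~\ref{lem:attribute}, well-definedness via Lemma~\ref{lem:nodiamond}, then routine propagation for (i), (ii), and uniqueness).
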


Under Theorem~\ref{thm:attribute}, we can define the attributes regarding the partial order $\tdm$.  
That is, we say that the attribute of $D\in \up{C_0}$ is $S \in \bpargtpart{G}{T}{C_0}{B}$ 
if $D\in \up{S}$ holds, where $\up{S}$ is the member of the partition of $\up{C_0}$ 
as provided in Theorem~\ref{thm:attribute}.

\begin{ac} 
This study was supported by JSPS KAKENHI Grant Number 18K13451. 
\end{ac}

\bibliographystyle{splncs03.bst}
\bibliography{tdm.bib}

\end{document}